\newtheorem{theorem}{Theorem}
\newtheorem{lemma}{Lemma}
\newtheorem{proposition}{Proposition}
\newtheorem{definition}{Definition}
\theoremstyle{remark}
\newtheorem{remark}{Remark}
\newtheorem{apt}{Assumption}
\renewcommand{\L}[1]{\mathrm{L}^{#1}(\Omega)}
\newcommand{\Ltwo}{\L{2}}
\renewcommand{\H}{\mathrm{H}}
\newcommand{\B}{\mathrm{B}}
\newcommand{\R}{\mathbb{R}}
\newcommand{\N}{\mathbb{N}}
\newcommand{\F}{\mathsf{F}}
\newcommand{\W}{\mathbb{W}_N}
\newcommand{\WW}{\mathbb{W}}
\newcommand{\Wp}{\mathbb{W}_{N(p)}}
\newcommand{\cT}{\mathcal{T}}
\renewcommand{\u}{u}
\newcommand{\uW}{U}
\newcommand{\U}{U}
\newcommand{\dd}{\mathsf{d}}
\newcommand{\dx}{\dd\mat x}
\newcommand{\res}{\eta}
\newcommand{\Res}{\xi}
\newcommand{\eps}{\varepsilon}
\newcommand{\errex}{\mathfrak{e}_{\W}}
\newcommand{\NN}[1]{\|\nabla#1\|}
\newcommand{\cD}{\mathcal{D}}
\newcommand{\cN}{\mathcal{N}}
\newcommand{\HGamma}{\H^1_{\cD}(\Omega)}
\renewcommand{\O}{\mathcal{O}}
\newcommand{\set}[1]{\mathcal{#1}}
\newcommand{\mat}[1]{\bm{\mathsf{{#1}}}}
\newcommand{\dprod}[2]{\left<#1,#2\right>}
\newcommand{\Sp}[1]{\mathbb{S}^p_{\cD}(\Omega;#1)}
\newcommand{\Spk}[1]{\mathbb{S}^{p_k}_{\cD}(\Omega;#1)}
\newcommand{\tp}{\mathfrak{p}}
\newcommand{\Pf}{\Pi_{\tp}(f)}
\newcommand{\tU}{\mathscr{U}}
\DeclareMathOperator{\D}{D}
\DeclareMathOperator{\dist}{dist}
\DeclareMathOperator{\diam}{diam}
\DeclareMathOperator{\work}{work}
\DeclareMathOperator{\Div}{div}
\title[%
       $hp$-ILGFEM
       for semilinear elliptic PDE]{%
       Exponential Convergence of $hp$-ILGFEM
       for semilinear elliptic boundary value problems with monomial reaction}
\author[Y.~He]{Yanchen He}
\address{Seminar for Applied Mathematics, ETH Z\"urich, CH-8092 Z\"urich, Switzerland}
\author[P.~Houston]{Paul Houston}
\address{School of Mathematical Sciences, University of Nottingham, University Park,
Nottingham NG7 2RD, UK}
\author[C.~Schwab]{Christoph Schwab}
\address{Seminar for Applied Mathematics, ETH Z\"urich, CH-8092 Z\"urich, Switzerland}
\author[T.~P.~Wihler]{Thomas P.~Wihler}
\address{Mathematics Institute, University of Bern, CH-3012 Bern, Switzerland}
\thanks{TPW acknowledges the support of the Swiss National Science Foundation, Grant No. 200021\_212868}
\date{Draft---\today}
\begin{document}

\begin{abstract}
We study the fully explicit numerical approximation of a semilinear elliptic boundary value model problem, which 
features a monomial reaction and analytic forcing, 
in a bounded polygon $\Omega\subset\R^2$ with a finite number of straight edges. 
In particular, we analyze
the convergence of $hp$-type iterative linearized Galerkin ($hp$-ILG) solvers.
Our convergence analysis is carried out for conforming $hp$-finite element (FE) 
Galerkin discretizations
on sequences of regular, simplicial partitions of $\Omega$, with
geometric corner refinement, with polynomial
degrees increasing in sync with the geometric mesh refinement
towards the corners of $\Omega$.
For a sequence of 
discrete solutions generated by the ILG solver, with a
stopping criterion that is consistent with the exponential convergence 
of the exact $hp$-FE Galerkin solution, we prove exponential convergence in $\H^1(\Omega)$
to the unique weak solution of the boundary value problem. 
Numerical experiments illustrate the exponential convergence 
of the numerical approximations obtained from the proposed scheme 
in terms of the number of degrees of freedom 
as well as of the computational complexity involved.
\end{abstract}
 
\maketitle

\section{Introduction}
%
On an open bounded polygon $\Omega\subset\R^2$ with straight edges we aim to design and analyze a  fully discrete numerical approximation scheme in terms of an exponentially convergent $hp$-version finite element method ($hp$-FEM) for the semilinear elliptic diffusion-reaction model equation
\begin{subequations}\label{eq:sl}
\begin{align}\label{eq:sla}
-\Delta u+\lambda u^{2q+1}&=f\qquad\text{in }\Omega,
\intertext{%
where $q\in\mathbb{N}_0$ is a fixed integer, $\lambda> 0$ is a positive constant, and $f\in\L{\nicefrac{2(q+1)}{(2q+1)}}$ is a given source function (independent of $u$).
In order to impose \emph{Dirichlet} and \emph{Neumann} boundary conditions, we signify the individual (straight) edges of the boundary $\Gamma:=\partial\Omega$ by $\Gamma_i$, $i=1,2,\ldots,m$; here, we suppose that each edge $\Gamma_i$ is the line connecting two consecutive corners $\mat c_i$ and $\mat c_{i+1}$ of~$\Omega$ (where we let $\mat c_{m+1}=\mat c_1$). For a \emph{nonempty} index set $\cD\subseteq\{1,2,\dots,m\}$, which will be used to identity \emph{Dirichlet} boundary edges, we define the remaining indices $\cN:=\{1,2,\dots,m\}\setminus\cD$ to specify the \emph{Neumann} boundary part. In accordance with the (disjoint) decomposition $\cD\cup\cN=\{1,\ldots,m\}$, we introduce the boundary conditions
}
u&=0\qquad\text{on }\Gamma_{\cD}:=\bigcup_{i\in\cD}\Gamma_i,\label{eq:Dirichlet}
\intertext{and}
\partial_{\mat n} u&=0\qquad\text{on }\Gamma_{\cN}:=\bigcup_{i\in\cN}\Gamma_i,
\end{align}
\end{subequations}
where $\partial_{\mat n} u$ is the outward normal derivative along~$\Gamma_{\cN}$.

Admitting a variational formulation, see~\eqref{eq:weak} below, the boundary value problem~\eqref{eq:sl} will be naturally discretized by applying conforming Galerkin projections on a sequence of subspaces $\W \subset \HGamma$ of finite dimension $N \sim \dim(\W)$, where
\[
\HGamma:=\{v\in \H^1(\Omega):v|_{\Gamma_{\cD}}=0\}
\] 
is the usual Sobolev space of all weakly differentiable functions in the Lebesgue space~$\Ltwo$, with first-order partial derivatives in $\Ltwo$, and zero boundary values along $\Gamma_\cD$ in the sense of traces. Owing to the monotone structure of the nonlinear reaction term in \eqref{eq:sl}, the associated standard weak formulation is uniquely solvable, both in the continuous and in the discrete setting, and a quasi-optimality property can be derived (see Propositions~\ref{prop:weak}, \ref{prop:weakW}, and~\ref{prop:qo} below). 

The Galerkin discretization of \eqref{eq:sl} results in a nonlinear algebraic system of $\dim(\W)$ equations, whose structure depends on the subspace $\W$
as well as on the choice of basis functions for $\W$, that must be solved by an appropriate numerical approximation procedure. The purpose of the present paper is to propose an iterative solver for these nonlinear equations, which is based on \emph{linear} Galerkin discretizations in the individual steps, and to prove its global convergence. In particular, in the case
where $\W$ is realized by a sequence of $hp$-finite element subspaces, for analytic forcing $f$  in $\overline{\Omega}$ in \eqref{eq:sla}, we show that it is possible to terminate the iterative procedure after finitely many steps in a way that the resulting numerical approximations exhibit \emph{exponential rates of convergence} in $\H^1(\Omega)$ as expressed in terms of the overall computational work (see the main result, Theorem~\ref{thm:MainRes}, of this paper).

\subsection*{Previous work}
Earlier results on the numerical analysis of iterative Galerkin methods for nonlinear
elliptic partial differential equations (PDEs), which are closely related to the approach developed here, can be found, e.g., in~\cite{BDMS15,BDMR17}, where low-order (so-called ``$h$-version'') FEM for the solution of  \eqref{eq:sl} have been proposed and analyzed. In the context of $hp$-version FEM, we refer to the article~\cite{CongreveWihler:17} on strongly monotone elliptic problems. More generally, the so-called \emph{iterative linearized Galerkin (ILG)} methodology for nonlinear PDEs, which is based on \emph{a-posteriori} residual estimators for the Galerkin discretization error and the iterative solution of the associated nonlinear algebraic equations, has been developed in the series of papers~\cite{AmreinWihler:15,HeidPraetWihler:21,HeidWihler:20} and the references cited therein. In addition, we refer to the works~\cite{ErnVohralik:13,El-AlaouiErnVohralik:11}, where the use of inexact linear solvers was also taken into account. Finally, for iterative linearized $hp$-adaptive (discontinuous) Galerkin discretizations for semilinear PDEs, we mention the paper~\cite{HoustonWihler:18}.
%
\subsection*{Contribution}
If the source term $f$ in the PDE~\eqref{eq:sl} is analytic in $\overline{\Omega}$,
then the (unique) weak solution of~\eqref{eq:sl} belongs to a class of functions that are analytic at each point 
in (the open domain)~$\Omega$, with quantitative control on the loss of analyticity of these functions
in a vicinity of the corner points; see the recent work~\cite{HS22_1031}. More precisely, the weak solution belongs to a class of functions that satisfy analytic estimates in a scale of corner-weighted Hilbertian Sobolev spaces of Kondrat'ev type.
As it is well-known, see, e.g., \cite{FS20_2675,ChSphp98},
membership of a function in such classes allows for the establishment of \emph{exponential approximability}
in suitable discrete spaces of continuous piecewise polynomial functions in $\Omega$
on regular, simplicial partitions that are geometrically refined 
towards the corners of $\Omega$. 
For the semilinear elliptic boundary value problem \eqref{eq:sl} under consideration,
we formulate the corresponding exponential approximability result in 
Theorem~\ref{thm:ExpAppr} below.

The Galerkin discretization of a suitable variational formulation of~\eqref{eq:sl} amounts to a possibly large nonlinear algebraic system of equations for the 
coefficients in the representation of the Galerkin approximation.
We present an iterative scheme for the efficient numerical solution of 
these equations, and establish geometric 
convergence of the iterates in the $\H^1(\Omega)$-norm in Theorem~\ref{thm:ILGConv} below.
The proof is based on energy type arguments, and thereby, 
constants in the contraction rate bounds do not explicitly 
depend on the dimension $d_N$ of the underlying $hp$-Galerkin approximation space.

The main achievement of our paper is a fully explicit, iterative $hp$-version finite element scheme for the numerical solution of~\eqref{eq:sl}, with a rigorous theoretical analysis of its exponential approximation properties and computational complexity. The key idea is to terminate the iteration in tandem with the  (asymptotically) exponential convergence rates in the $hp$-Galerkin discretization, and thereby, to reach a prescribed target accuracy $0<\eps\ll 1$ in the $\H^1(\Omega)$-norm. We prove that this procedure requires at most $\O(|\log(\eps)|^7)$ floating point operations, which is confirmed computationally in \S\ref{sec:numerics} of this paper, where we report on several numerical experiments for the proposed $hp$-ILG method that are in complete agreement with the theoretical error bounds.

\subsection*{Notation}
For a summability index $r\in (0,\infty]$, 
we denote by $\L{r}$ the usual spaces of Lebesgue measurable, 
$r$-integrable functions in $\Omega$. 
For a nonnegative integer $k$, 
we write $\H^k(\Omega)$ to be the Hilbertian Sobolev space of
$k$-times weakly differentiable functions in $\Omega$ whose weak derivatives
of order $k$ belong to $\L{2}$, with the convention $\H^0(\Omega) = \L{2}$. 
Furthermore, 
in any statements about ``error-vs.-work'', 
the notion of ``work'' refers to an integer number of floating-point operations.
\subsection*{Outline}
In \S\ref{sec:WeakSol} we discuss the weak formulation of the boundary value problem~\eqref{eq:sl}. 
In particular, in \S\ref{sec:AnReg}, we introduce corner-weighted 
Sobolev spaces of Kondrat'ev type, and revisit an analytic regularity
result for the weak solution from~\cite{HS22_1031}.
Then, \S\ref{sec:ILG} deals with the design and analysis of the ILG approach in abstract Galerkin spaces.
Furthermore, in the context of $hp$-FE spaces, we establish the geometric rate of convergence, i.e. 
reaching a target tolerance $\eps>0$ in $\O(|\log(\eps)|)$ many iteration steps. 
We develop an error vs. complexity analysis of the $hp$-discretization, including the nonlinear
iteration with finite termination, in \S\ref{sec:hpFEM} . In \S\ref{sec:numerics}
we present some numerical examples that confirm
the theoretical results within a computational context. Finally,
in \S\ref{sec:Concl} we summarize the principal findings of this
article.
\section{Weak solution}
\label{sec:WeakSol}
We introduce a weak formulation of \eqref{eq:sl}, and establish the
existence and uniqueness of weak solutions. In addition, from \cite{HS22_1031}, we recapitulate the analytic regularity of the weak solution~$u$ of \eqref{eq:sl} in $\Omega$, i.e., we discuss
\emph{a priori} estimates for $u$ in corner-weighted 
Sobolev spaces of arbitrary order, for given forcing $f$ that is
analytic in $\Omega$. 
Crucially, this corner-weighted, analytic regularity is known to imply
exponential approximability of so-called $hp$-FE 
approximations which will be discussed in
\S\ref{sec:ExpConv} below.
\subsection{Weak formulation}
The weak formulation of~\eqref{eq:sl} is to find $\u\in\HGamma$ such that
\begin{equation}\label{eq:weak}
a(\u,v)+\lambda b(u;v)=\int_\Omega fv\,\dx\qquad\forall v\in\HGamma,
\end{equation}
where we define the standard bilinear form via
\begin{equation}\label{eq:a}
a(v,w):=\int_\Omega\nabla v\cdot\nabla w\,\dx,\qquad
v,w\in\HGamma,
\end{equation}
and, for any given $u\in\HGamma$, the linear form
\begin{equation}\label{eq:b}
v\mapsto b(u;v) := \int_\Omega u^{2q+1}v\,\dx,\qquad v\in\HGamma.
\end{equation}

\begin{remark}\label{rem:bbound}
Exploiting the continuous Sobolev embedding bound
\begin{equation}\label{eq:Sobolev}
\|v\|_{\L{2(q+1)}}\le C_q(\Omega)\NN{v}_{\Ltwo}\qquad\forall v\in\HGamma,
\end{equation}
where $C_q(\Omega)>0$ is a constant depending only on $\Omega$, $\cD$ and $\cN$, and $q\in[0,\infty)$,
for any fixed $u\in\HGamma$, we note that the mapping $v\mapsto b(u;v)$ is a bounded linear functional on $\HGamma$. Indeed, this follows immediately from H\"older's inequality, which implies that
\begin{align*}
|b(u;v)|
&\le \|u\|^{2q+1}_{\L{2(q+1)}}\|v\|_{\L{2(q+1)}}
\le C_q(\Omega)^{2(q+1)}\NN{u}^{2q+1}_{\Ltwo}\NN{v}_{\Ltwo},
\end{align*}
for any $v\in\HGamma$.
\end{remark}

The following technical result is instrumental for our 
ensuing convergence analysis of the $hp$-ILG discretization of~\eqref{eq:sl}.

\begin{lemma}
For any $q\in\mathbb{N}_0$ the nonlinear form from~\eqref{eq:b} satisfies the monotonicity property 
\begin{equation}\label{eq:bmonotone}
b(u;u-v)-b(v;u-v)\ge 0\qquad\forall u,v\in\HGamma.
\end{equation}
Furthermore, the (local) Lipschitz continuity bound holds
\begin{multline}\label{eq:bLip}
|b(u;v)-b(w;v)|\\
\le(2q+1)C_q(\Omega)^{2(q+1)}\left(\|\nabla u\|_{\Ltwo}+\|\nabla(u-w)\|_{\Ltwo}\right)^{2q}
 \|\nabla(u-w)\|_{\Ltwo}\|\nabla v\|_{\Ltwo},
\end{multline}
for any $u,v,w\in\HGamma$, with $C_q(\Omega)$ the constant from~\eqref{eq:Sobolev}.
\end{lemma}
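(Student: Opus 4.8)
The plan is to handle the two assertions separately; both reduce to elementary pointwise algebra followed by an application of the Sobolev embedding~\eqref{eq:Sobolev}. For the monotonicity bound~\eqref{eq:bmonotone}, I would first rewrite, using~\eqref{eq:b},
\[
b(u;u-v)-b(v;u-v)=\int_\Omega\left(u^{2q+1}-v^{2q+1}\right)(u-v)\,\dx,
\]
and then observe that the integrand is nonnegative almost everywhere: since $2q+1$ is odd, the real map $t\mapsto t^{2q+1}$ is monotonically increasing, so $s^{2q+1}-t^{2q+1}$ carries the same sign as $s-t$ for all $s,t\in\R$. Hence $(u^{2q+1}-v^{2q+1})(u-v)\ge 0$ pointwise, and integrating yields~\eqref{eq:bmonotone} directly.

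For the Lipschitz estimate~\eqref{eq:bLip}, the starting point is the factorization of the difference of $(2q+1)$-th powers,
\[
u^{2q+1}-w^{2q+1}=(u-w)\sum_{j=0}^{2q}u^{j}w^{2q-j}.
\]
Bounding each of the $2q+1$ summands by $(\max\{|u|,|w|\})^{2q}$ and using $|w|\le|u|+|u-w|$ gives the pointwise bound
\[
\left|u^{2q+1}-w^{2q+1}\right|\le(2q+1)\left(|u|+|u-w|\right)^{2q}|u-w|.
\]
Substituting this into $b(u;v)-b(w;v)=\int_\Omega(u^{2q+1}-w^{2q+1})v\,\dx$ and passing to absolute values under the integral sets up the estimate for an application of the generalized H\"older inequality.

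The crux of the argument—and the step that makes the constants line up exactly—is the choice of H\"older exponents. The resulting integrand is a product of $2q$ copies of $\left(|u|+|u-w|\right)$, together with one factor $|u-w|$ and one factor $|v|$, for a total of $2q+2=2(q+1)$ factors. Assigning to each the common exponent $2(q+1)$, whose reciprocals sum to one, produces precisely
\[
|b(u;v)-b(w;v)|\le(2q+1)\,\bigl\||u|+|u-w|\bigr\|_{\L{2(q+1)}}^{2q}\,\|u-w\|_{\L{2(q+1)}}\,\|v\|_{\L{2(q+1)}}.
\]
Finally, the triangle inequality in $\L{2(q+1)}$, giving $\bigl\||u|+|u-w|\bigr\|_{\L{2(q+1)}}\le\|u\|_{\L{2(q+1)}}+\|u-w\|_{\L{2(q+1)}}$, together with the embedding~\eqref{eq:Sobolev} applied to each of the $2(q+1)$ norms, converts every $\L{2(q+1)}$-norm into $C_q(\Omega)\|\nabla\cdot\|_{\Ltwo}$; collecting the $2(q+1)$ resulting factors of $C_q(\Omega)$ yields the claimed bound~\eqref{eq:bLip}. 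I expect no genuine obstacle beyond this bookkeeping: the only point requiring care is recognizing that the number of factors matches the Sobolev target exponent $2(q+1)$ exactly, so that a single uniform H\"older exponent closes the estimate.
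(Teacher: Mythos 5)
Your proof is correct, and while the monotonicity part coincides with the paper's argument (pointwise monotonicity of $t\mapsto t^{2q+1}$, then integrate), your route to the Lipschitz bound \eqref{eq:bLip} is genuinely different from the paper's. The paper expands $x^{2q+1}-y^{2q+1}=\sum_{k=0}^{2q}\binom{2q+1}{k}(x-y)^{2q+1-k}y^k$ via the binomial theorem, applies a \emph{triple} H\"older inequality with $k$-dependent exponents $\nicefrac{2(q+1)}{(2q+1-k)}$, $\nicefrac{2(q+1)}{k}$, $2(q+1)$ to each term separately (with a side remark to cover the case $k=0$), and then resums the series at the norm level using $\binom{2q+1}{k}\le(2q+1)\binom{2q}{k}$ together with the binomial theorem to reassemble the factor $\left(\|u-w\|_{\L{2(q+1)}}+\|u\|_{\L{2(q+1)}}\right)^{2q}$. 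You instead use the telescoping factorization $u^{2q+1}-w^{2q+1}=(u-w)\sum_{j=0}^{2q}u^jw^{2q-j}$ and consolidate everything \emph{pointwise} into $(2q+1)\left(|u|+|u-w|\right)^{2q}|u-w|$ before integrating, so that a single generalized H\"older inequality with the uniform exponent $2(q+1)$ over $2(q+1)$ factors, followed by the triangle inequality in $\L{2(q+1)}$ and $2(q+1)$ applications of the embedding \eqref{eq:Sobolev}, closes the estimate. Your exponent count is right ($\nicefrac{(2q+2)}{2(q+1)}=1$), your pointwise bound is valid since $\max\{|u|,|w|\}\le|u|+|u-w|$, and the constant you obtain, $(2q+1)C_q(\Omega)^{2(q+1)}$, is \emph{identical} to the paper's, so nothing is lost by the cruder pointwise step. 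What your version buys is the elimination of combinatorial bookkeeping: no binomial coefficients, no mixed H\"older exponents varying with $k$, and no edge-case discussion (your argument degenerates gracefully to Cauchy--Schwarz at $q=0$). The paper's term-by-term structure keeps a slightly finer intermediate bound, namely the full polynomial $\sum_{k=0}^{2q}\binom{2q+1}{k}\|u-w\|^{2q-k}_{\L{2(q+1)}}\|u\|^{k}_{\L{2(q+1)}}$ in the two norms, before it too coarsens to the stated form; for the lemma as stated, however, the two approaches are equally sharp, and yours is the more economical.
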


\begin{proof}
The estimate~\eqref{eq:bmonotone} follows immediately by noticing that the nonlinear reaction term $g(t):=t^{2q+1}$, $t\in\mathbb{R}$, occurring in~\eqref{eq:sl} satisfies the monotonicity property
\[
(g(t_1)-g(t_2))(t_1-t_2)\ge 0\qquad\forall t_1,t_2\in\R;
\]
then, for any $u,v\in\HGamma$, we infer that
\begin{equation}
\label{eq:monotonicity}
b(u;u-v)-b(v;u-v)
=\int_\Omega\left(u^{2q+1}-v^{2q+1}\right)(u-v)\,\dx\ge 0.
\end{equation}
In order to derive the bound~\eqref{eq:bLip},  for any $x,y\in\R$, we observe the binomial formula
\[
x^{2q+1}=\sum_{k=0}^{2q+1}\binom{2q+1}{k}(x-y)^{2q+1-k}y^k,
\]
or equivalently,
\[
x^{2q+1}-y^{2q+1}
=\sum_{k=0}^{2q}\binom{2q+1}{k}(x-y)^{2q+1-k}y^k.
\]
Therefore, for any $u,v,w\in\HGamma$, it follows that
\[
|b(w;v)-b(u;v)|
\le\sum_{k=0}^{2q}\binom{2q+1}{k}\int_\Omega|u-w|^{2q+1-k}|u|^k|v|\,\dx.
\]
Applying a (triple) H\"older inequality, for $1\le k\le 2q$, we note that
\begin{multline*}
\int_\Omega|u-w|^{2q+1-k}|u|^k|v|\,\dx\\
\le
\left(\int_\Omega|u-w|^{2(q+1)}\right)^{\nicefrac{(2q+1-k)}{2(q+1)}}
\left(\int_\Omega|u|^{2(q+1)}\right)^{\nicefrac{k}{2(q+1)}}
\left(\int_\Omega|v|^{2(q+1)}\right)^{\nicefrac{1}{2(q+1)}};
\end{multline*}
it is straightforward to see that this bound extends to the case $k=0$. Hence, we obtain
\begin{align*}
|b(u;v)-b(w;v)|
&\le \|v\|_{\L{2(q+1)}}\sum_{k=0}^{2q}\binom{2q+1}{k}
\|u-w\|^{2q+1-k}_{\L{2(q+1)}}
\|u\|^k_{\L{2(q+1)}}\\
&= \|u-w\|_{\L{2(q+1)}}\|v\|_{\L{2(q+1)}}\sum_{k=0}^{2q}\binom{2q+1}{k}
\|u-w\|^{2q-k}_{\L{2(q+1)}}
\|u\|^k_{\L{2(q+1)}}.
\end{align*}
Noticing that
\[
\binom{2q+1}{k}
\le
(2q+1)\binom{2q}{k},\qquad 0\le k\le 2q,
\]
and recalling the Sobolev embedding bound~\eqref{eq:Sobolev}, completes the proof.
\end{proof}

\subsection{Well-posedness of weak formulation}
\label{sec:WellPsd}
The semilinear boundary value problem~\eqref{eq:sl} admits a unique weak solution in $\HGamma$.


\begin{proposition}[Existence, uniqueness, and stability]\label{prop:weak}
For the weak formulation~\eqref{eq:weak} there exists exactly one solution $u\in\HGamma$. 
This solution is stable in the sense that
\begin{subequations}
\begin{align}
&\NN{\u}_{\Ltwo}\le\rho,
\label{eq:ustab}
\intertext{where}
&\rho:=C_q(\Omega)\|f\|_{\L{\nicefrac{2(q+1)}{(2q+1)}}},
\label{eq:rho}
\end{align}
\end{subequations}
with $C_q(\Omega)>0$ the Sobolev embedding constant from~\eqref{eq:Sobolev}.
\end{proposition}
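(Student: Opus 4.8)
The plan is to recast~\eqref{eq:weak} as an operator equation and invoke the theory of monotone operators. Define the (nonlinear) operator $\mathcal{A}\colon\HGamma\to(\HGamma)^*$ by
\[
\dprod{\mathcal{A}u}{v}:=a(u,v)+\lambda b(u;v),\qquad u,v\in\HGamma,
\]
and let $F\in(\HGamma)^*$ be the linear functional $\dprod{F}{v}:=\int_\Omega fv\,\dx$. Its boundedness follows from H\"older's inequality with the conjugate exponents $\nicefrac{2(q+1)}{(2q+1)}$ and $2(q+1)$ together with the Sobolev embedding~\eqref{eq:Sobolev}, which also yields $\|F\|_{(\HGamma)^*}\le C_q(\Omega)\|f\|_{\L{\nicefrac{2(q+1)}{(2q+1)}}}$. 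Recalling that $\NN{\cdot}_{\Ltwo}$ is a norm on $\HGamma$ (Poincar\'e--Friedrichs, as $\cD\neq\emptyset$), problem~\eqref{eq:weak} is equivalent to finding $u\in\HGamma$ with $\mathcal{A}u=F$.

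Next I would verify the hypotheses of the Browder--Minty surjectivity theorem on the reflexive (indeed Hilbert) space $\HGamma$. For \emph{strong monotonicity}, since $a(u-w,u-w)=\NN{u-w}^2_{\Ltwo}$ and the reaction contribution is controlled by~\eqref{eq:bmonotone}, one obtains
\[
\dprod{\mathcal{A}u-\mathcal{A}w}{u-w}
=\NN{u-w}^2_{\Ltwo}+\lambda\bigl(b(u;u-w)-b(w;u-w)\bigr)
\ge\NN{u-w}^2_{\Ltwo}.
\]
For \emph{coercivity}, testing against $u$ and using $b(u;u)=\|u\|^{2(q+1)}_{\L{2(q+1)}}\ge 0$ gives $\dprod{\mathcal{A}u}{u}\ge\NN{u}^2_{\Ltwo}$, so that $\dprod{\mathcal{A}u}{u}/\NN{u}_{\Ltwo}\to\infty$ as $\NN{u}_{\Ltwo}\to\infty$. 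For \emph{hemicontinuity}, linearity disposes of $a$, while~\eqref{eq:bLip} applied to the pair $u+tw$ and $u+sw$ (so that the increment in the formula equals $(t-s)w$) shows $t\mapsto\dprod{\mathcal{A}(u+tw)}{v}$ is continuous. Browder--Minty then yields a solution, and the strong monotonicity estimate above forces it to be unique.

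Finally, for the stability bound~\eqref{eq:ustab}, I would test~\eqref{eq:weak} with $v=u$, discard the nonnegative term $\lambda b(u;u)\ge 0$, and estimate the right-hand side by H\"older's inequality followed by~\eqref{eq:Sobolev}:
\[
\NN{u}^2_{\Ltwo}
\le\int_\Omega fu\,\dx
\le\|f\|_{\L{\nicefrac{2(q+1)}{(2q+1)}}}\|u\|_{\L{2(q+1)}}
\le C_q(\Omega)\|f\|_{\L{\nicefrac{2(q+1)}{(2q+1)}}}\NN{u}_{\Ltwo}.
\]
Dividing through by $\NN{u}_{\Ltwo}$ delivers exactly~\eqref{eq:ustab} with $\rho$ as in~\eqref{eq:rho}.

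The step I expect to require the most care is the verification of hemicontinuity and, relatedly, that $\mathcal{A}$ genuinely maps into $(\HGamma)^*$: one must confirm that the superposition $v\mapsto b(u+tw;v)$ depends continuously on $t$, which is precisely where the local Lipschitz structure isolated in~\eqref{eq:bLip} is indispensable (the polynomial prefactor stays bounded for $s$ near $t$, and the increment factor tends to zero). Everything else reduces to the coercivity of $a$ and the sign and growth properties of the monomial nonlinearity already established in the preceding lemma.
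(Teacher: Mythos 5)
Your proof is correct, but it takes a genuinely different route from the paper. The paper exploits the variational structure of the problem: it introduces the energy functional $\F(v)=\frac12\NN{v}^2_{\Ltwo}+\frac{\lambda}{2(q+1)}\|v\|^{2(q+1)}_{\L{2(q+1)}}-\int_\Omega fv\,\dx$, shows that $\F$ is weakly coercive, computes its G\^ateaux derivative, verifies that $\F'$ is strictly monotone via~\eqref{eq:bmonotone}, and then invokes the main theorem on monotone \emph{potential} operators \cite[Thm.~25.F]{Zeidler:IIB}, so that existence stems from the minimization of $\F$ and the weak formulation appears as the Euler--Lagrange equation~\eqref{eq:F'}. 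You instead treat $\mathcal{A}u:=a(u,\cdot)+\lambda b(u;\cdot)$ directly as a monotone operator from $\HGamma$ into its dual and check the hypotheses of the Browder--Minty surjectivity theorem: strong monotonicity (again from~\eqref{eq:bmonotone} together with the Poincar\'e--Friedrichs inequality, which you rightly flag as using $\cD\neq\emptyset$), coercivity (from $b(u;u)\ge 0$), and hemicontinuity, for which you correctly identify the local Lipschitz bound~\eqref{eq:bLip} as the essential ingredient --- a verification the paper's potential-operator route sidesteps entirely, since there one only needs the formula for $\F'$ and its monotonicity. What your approach buys is generality: it never uses that $\mathcal{A}$ is a gradient, so it would survive non-variational monotone perturbations of the reaction term; what the paper's approach buys is economy, dispensing with the hemicontinuity check and producing the convex energy $\F$ as a by-product. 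Uniqueness (from strict monotonicity) and the stability bound~\eqref{eq:ustab} --- testing with $v=u$, discarding $\lambda b(u;u)\ge 0$, and applying H\"older together with~\eqref{eq:Sobolev} --- are handled identically in both arguments.
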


\begin{proof}
We aim to apply the main theorem on monotone potential operators. To this end, we define the functional
\[
\F(v):=\frac12 \int_\Omega|\nabla v|^2\,\dx+\frac{\lambda}{2(q+1)}\int_\Omega v^{2(q+1)}\,\dx-\int_\Omega fv\,\dx,\qquad v\in\HGamma,
\]
which is well-defined owing to the Sobolev embedding bound~\eqref{eq:Sobolev}. For any $v\in\HGamma$, employing H\"older's inequality, and exploiting~\eqref{eq:Sobolev}, we note that
\begin{equation}\label{eq:fbound}
\left|\int_\Omega fv\,\dx\right|\le
\|f\|_{\L{\nicefrac{2(q+1)}{(2q+1)}}}\|v\|_{\L{2(q+1)}}
\le C_q(\Omega) \|f\|_{\L{\nicefrac{2(q+1)}{(2q+1)}}}\NN{v}_{\Ltwo}\quad\forall v\in\HGamma.
\end{equation}
Hence, we deduce that
\[
\F(v)\ge\frac12\NN{v}^2_{\Ltwo}
- C_q(\Omega) \|f\|_{\L{\nicefrac{2(q+1)}{(2q+1)}}}\NN{v}_{\Ltwo}\to+\infty,
\]
whenever $\NN{v}_{\Ltwo}\to\infty$, i.e., $\F$ is \emph{weakly coercive}. 
For the G\^ateaux derivative of $\F$ we have that
\[
\dprod{\F'(u)}{v}=\int_\Omega\nabla u\cdot\nabla v\,\dx+\lambda\int_\Omega u^{2q+1}v\,\dx-\int_\Omega fv\,\dx\qquad\forall u,v\in\HGamma,
\]
i.e., the weak formulation~\eqref{eq:weak} is equivalent to 
\begin{equation}\label{eq:F'}
\dprod{\F'(u)}{v}=0\qquad\forall v\in\HGamma,
\end{equation}
where $\dprod{\cdot}{\cdot}$ signifies the dual product.
Furthermore, using~\eqref{eq:bmonotone}, we observe that
\[
\dprod{\F'(u)-\F'(v)}{u-v}
=\NN{(u-v)}^2_{\Ltwo}+\lambda(b(u;u-v)-b(v;u-v))\ge\NN{(u-v)}^2_{\Ltwo}>0
\]
for all $u\neq v$ in $\HGamma$, i.e., $\F'$ is strictly monotone. Then, applying~\cite[Thm.~25.F]{Zeidler:IIB} guarantees that there exists a unique solution of the weak formulation~\eqref{eq:F'}. Finally, for $v=u$ in~\eqref{eq:weak}, applying~\eqref{eq:fbound}, and invoking~\eqref{eq:rho}, we conclude that
\[
\NN{u}^2_{\Ltwo}
\le a(u,u)+\lambda b(u;u)
=\int_\Omega fu\,\dx
\le C_q(\Omega) \|f\|_{\L{\nicefrac{2(q+1)}{(2q+1)}}}\NN{u}_{\Ltwo}
\le \rho\NN{u}_{\Ltwo},
\]
which completes the argument.
\end{proof}

\subsection{Analytic regularity in scales of Hilbertian, corner-weighted Sobolev spaces}
\label{sec:AnReg}
If the underlying domain $\Omega$ for the boundary value problem~\eqref{eq:sl} exhibits corners then
it is well-known that the inverse Dirichlet Laplace operator
does not provide full elliptic regularity. Indeed, for $f\in \Ltwo$,
the solution belongs to $\H^{2}(\Omega^\circ)$ 
in any open, interior subset $\Omega^\circ$, with $\overline{\Omega^\circ}\subset\Omega$. 
The $\H^{2}$-regularity does in general, however, not hold in
a vicinity of the corners. 
To describe this behaviour more precisely, 
the scale of Hilbertian, corner-weighted Sobolev spaces 
$\H^{k,l}_{\mat\beta}(\Omega)$, for $k \geq l \in\mathbb{N}$, 
has been introduced, e.g., in~\cite{BabuskaGuo:88}; 
here, $\mat\beta=(\beta_1,\ldots,\beta_m)$ is a vector of (scalar)
corner weight exponents $\beta_i\in[0,1)$, $i=1,\ldots,m$, 
that are associated to the $m$ corner points $\mat c_1,\ldots,\mat c_m$ of $\Omega$. 
Then, based on the corner-weight function
$
\Phi_{\mat\beta}(\mat x):=\prod_{i=1}^m \mathrm{dist}(\mat x,\mat c_i)^{\beta_i},
$
for integers $k$ and $\ell$ with $ k\geq \ell \ge 0$, 
we introduce the corner-weighted Sobolev norms 
$\| \circ \|_{\H^{k,\ell}_{\mat\beta}}$
in $\Omega$ via
\begin{equation}\label{eq:NrmWghted}
\|v\|_{\H^{k,\ell}_{\mat\beta}(\Omega)}^2
:=
\|v\|^2_{\H^{\ell-1}(\Omega)}
+
\sum_{j=\ell}^k
\sum_{\alpha_1+\alpha_2=j}
\|\Phi_{\mat\beta+j-\ell}\partial^{\alpha_1}_{x_1}\partial^{\alpha_2}_{x_2} v \|^2_{\Ltwo},
\end{equation}
and 
define corner-weighted Sobolev spaces
\[
\H^{k,\ell}_{\mat\beta}(\Omega)
=\left\{
v\in\H^{1}(\Omega):\,\|v\|_{\H^{k,\ell}_{\mat\beta}(\Omega)} < \infty
\right\}.
\]
In \eqref{eq:NrmWghted}, 
for $\ell = 0$, the term $\|v\|^2_{\H^{\ell-1}(\Omega)}$ is dropped. 
Referring to, e.g., \cite{ChSphp98} and the references cited therein, it also holds that
\begin{align*}
\H^{2,2}_{\mat\beta}(\Omega) \subset C(\overline{\Omega})& \quad \text{with continuous embedding},
\intertext{and}
\H^{2,2}_{\mat\beta}(\Omega) \subset \H^1(\Omega) &\quad \text{with compact embedding}.
\end{align*}

Based on the corner-weighted Sobolev spaces $\H^{k,\ell}_{\mat\beta}(\Omega)$ of finite order $k$,
we introduce \emph{corner-weighted, analytic classes}  
$\B^\ell_{\mat\beta}(\Omega)$.
\begin{definition} [Weighted analytic class $\B^\ell_{\mat\beta}(\Omega)$]
Let $\ell \geq 0$ be an integer. 
A function $v$ belongs to the class $\B^\ell_{\mat\beta}(\Omega)$ 
if
\begin{enumerate}[\rm(1)]
\item
$v$ belongs to $\H^{k,\ell}_{\mat\beta}(\Omega)$ for all integer $k\geq \ell$,
and 
\item
it holds that 
there exist constants $C_v, d_v > 0$ 
such that
$$
\forall k\geq \ell:\quad 
\big\| \Phi_{\mat\beta + k -\ell} |\D^k v | \big\|_{\Ltwo} 
\leq 
C_v d_v^{k-\ell} (k-\ell)! 
\;,
$$
with the notation $\big|\D^k v\big| = \sum_{\alpha_1+\alpha_2=k} \left| \partial^{\alpha_1}_{x_1}\partial^{\alpha_2}_{x_2} v \right|$.
\end{enumerate}
\end{definition}
For $1\le i\le m$, we denote by $\omega_i\in(0,2\pi)$ the interior angle of $\Omega$ at the corner $\mat c_i$. The following regularity result was established in~\cite{HS22_1031}. 
\begin{proposition}[Regularity of weak solution]\label{prop:reg}
Let $\mat\beta\in[0,1)^m$ such that, for any $i\in\{1,2,\dots,m\}$, 
it holds
\begin{equation}\label{eq:DN}
\begin{split}
\begin{cases}
\beta_i>1-\nicefrac{\pi}{\omega_i}&\text{if }\{i-1,i\}\subset\cD\text{ or }\{i-1,i\}\subset\cN,\\ 
\beta_i>1-\nicefrac{\pi}{2\omega_i}&\text{otherwise.} 
\end{cases}
\end{split}
\end{equation}
Then for $f\in \B^0_{\mat\beta}(\Omega)$ in~\eqref{eq:sl}, the solution $u\in\HGamma$ of~\eqref{eq:sl} belongs to $\B^2_{\mat\beta}(\Omega)$.
\end{proposition}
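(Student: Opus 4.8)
The plan is to recast \eqref{eq:sl} as a linear boundary value problem for the Laplacian, with the nonlinear reaction absorbed into the right-hand side, and then to bootstrap by means of the linear analytic regularity theory in the weighted scale $\B^\ell_{\mat\beta}(\Omega)$. Setting $\tilde f := f-\lambda u^{2q+1}$, the weak solution $u$ of Proposition~\ref{prop:weak} satisfies $-\Delta u=\tilde f$ in $\Omega$ together with the mixed conditions \eqref{eq:Dirichlet}. Under the corner-weight condition \eqref{eq:DN}, the associated linear mixed problem enjoys a weighted \emph{shift theorem}: it maps data in $\H^{k-2,0}_{\mat\beta}(\Omega)$ to solutions in $\H^{k,2}_{\mat\beta}(\Omega)$, and, at the level of the analytic classes, data in $\B^0_{\mat\beta}(\Omega)$ to solutions in $\B^2_{\mat\beta}(\Omega)$. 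Since $f\in\B^0_{\mat\beta}(\Omega)$ by hypothesis and $\B^0_{\mat\beta}(\Omega)$ is a vector space, it therefore suffices to prove the nonlinear closure property that $u\in\B^2_{\mat\beta}(\Omega)$ implies $u^{2q+1}\in\B^0_{\mat\beta}(\Omega)$; the result then follows as a self-improvement (fixed-point) argument along the regularity scale.

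I would carry this out in two stages. In the first stage I establish finite-order weighted regularity, namely $u\in\H^{k,2}_{\mat\beta}(\Omega)$ for every integer $k\ge 2$, by induction on $k$. For the base case, $u\in\HGamma$ and, in two space dimensions, $\HGamma$ embeds continuously into $\L{r}$ for every finite $r$; hence $u^{2q+1}\in\Ltwo$, and since the weight $\Phi_{\mat\beta}$ is bounded on $\Omega$ we obtain $\tilde f\in\H^{0,0}_{\mat\beta}(\Omega)$, so that the shift theorem yields $u\in\H^{2,2}_{\mat\beta}(\Omega)$. The continuous embedding $\H^{2,2}_{\mat\beta}(\Omega)\subset C(\overline{\Omega})$ now furnishes $u\in\L{\infty}$. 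For the induction step, a Leibniz-rule computation for $\D^k(u^{2q+1})$ combined with the weighted multiplicative (H\"older) inequalities shows that $u^{2q+1}$ inherits the weighted regularity of $u$ at each finite order, whence $\tilde f\in\H^{k-2,0}_{\mat\beta}(\Omega)$ whenever $u\in\H^{k,2}_{\mat\beta}(\Omega)$, and one further application of the shift theorem advances the regularity by one order.

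The second stage upgrades these finite-order estimates to the factorial (analytic) bounds required for membership in $\B^2_{\mat\beta}(\Omega)$. The key ingredient is a \emph{multiplicative estimate} asserting that $\B^2_{\mat\beta}(\Omega)$ is closed under pointwise multiplication, with the analytic constants $C,d$ of a product controlled by those of the two factors. Applying it $2q$ times gives $u^{2q+1}\in\B^2_{\mat\beta}(\Omega)\subset\B^0_{\mat\beta}(\Omega)$, which closes the loop. Such an estimate is obtained by expanding $\D^k(u^{2q+1})$ via the Leibniz rule, distributing the corner weight $\Phi_{\mat\beta+k}$ across the factors so that each carries a strictly smaller weight shift, and summing the resulting products of weighted $\Ltwo$-norms; the binomial coefficients then combine with the individual factorial bounds $d_u^{\,j} j!$ to reorganize the total derivative count into the desired form $C\,d^{\,k}k!$.

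The main obstacle is precisely this second stage: controlling the growth of the analytic constant $d$ as it passes through the nonlinearity. The nested Leibniz sums generate auxiliary factors that must be reabsorbed into a single geometric constant uniformly in $k$, and the corner weight $\Phi_{\mat\beta+k}$ must be split across the factors without loss near each corner---this is where the constraints $\beta_i\in[0,1)$ and the gap condition \eqref{eq:DN} enter, ensuring that the weighted products remain summable in a neighbourhood of the $\mat c_i$. This combinatorial bookkeeping, rather than any conceptual subtlety, constitutes the crux of the argument; once the multiplicative estimate is in place, the shift theorem and the fixed-point closure assemble the claim directly.
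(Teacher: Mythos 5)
The paper itself does not prove Proposition~\ref{prop:reg}: it is quoted verbatim from \cite{HS22_1031}, so your attempt can only be compared against the strategy of that reference, which does indeed proceed along the bootstrap lines you outline (rewrite $-\Delta u=f-\lambda u^{2q+1}$, invoke the weighted shift theorem for the linear mixed problem under \eqref{eq:DN}, and control the monomial nonlinearity in the corner-weighted scale). Your stage one is sound: $u\in\HGamma\subset\L{r}$ for all finite $r$ in two dimensions gives $u^{2q+1}\in\Ltwo$, hence $u\in\H^{2,2}_{\mat\beta}(\Omega)\subset C(\overline\Omega)$, and the finite-order induction $u\in\H^{k,2}_{\mat\beta}(\Omega)$ for all $k$ is standard from there.

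The genuine gap is in your stage two, and it is logical rather than merely technical. As written, the loop ``$u\in\B^2_{\mat\beta}(\Omega)\implies u^{2q+1}\in\B^0_{\mat\beta}(\Omega)\implies u\in\B^2_{\mat\beta}(\Omega)$'' is circular: membership in $\B^2_{\mat\beta}(\Omega)$ is a single property, namely an infinite family of estimates governed by one pair of constants $(C_u,d_u)$, not a scale indexed by a parameter that your ``self-improvement'' ascends, so the fixed-point argument has no starting point --- the multiplicative (algebra) estimate may only be invoked once $u\in\B^2_{\mat\beta}(\Omega)$ is already known, which is exactly the conclusion. The correct mechanism, and the one used in the cited proof, is a strong induction on the differentiation order $k$ with constants fixed \emph{a priori}: one posits $\|\Phi_{\mat\beta+j-2}|\D^j u|\|_{\Ltwo}\le C d^{\,j-2}(j-2)!$ for all $j<k$, bounds the weighted norm of $\D^{k-2}(u^{2q+1})$ by a quantitative Leibniz estimate that uses only these lower-order bounds (the equation loses two orders, so the derivative count strictly decreases), applies the order-$k$ linear a priori estimate, and finally chooses $C,d$ large enough that the induction closes. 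Two quantitative ingredients that this requires are glossed over in your sketch: (i) the constant in the order-$k$ weighted shift estimate must be uniform in $k$ (in the literature this is obtained by local estimates on dyadic annuli around each corner combined with scaling, in the spirit of Morrey--Nirenberg and Babu\v{s}ka--Guo, not by a naive global shift theorem re-applied at every order); and (ii) the nested Leibniz sums for the monomial must produce a geometric, $k$-independent constant, which needs weighted interpolation inequalities together with the $\L{\infty}$ bound from stage one. You correctly sense that the constant-tracking is the crux, but the circular formulation of stage two would not survive being made precise and must be replaced by this order-by-order induction.
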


\begin{remark}
We note that the first bound in~\eqref{eq:DN} refers to a corner point~$\mat c_i$ 
that connects two adjacent ``Dirichlet edges'' or two ``Neumann edges'', 
while the second bound applies to a corner point $\mat c_i$ where the type of boundary condition changes.
\end{remark}
%
\section{Iterative linearized Galerkin (ILG) discretization} 
\label{sec:ILG}
%

It is well-known that certain Galerkin discretizations of elliptic equations, 
which are based on sequences $\{ \W \}_{N\geq 1}$ of subspaces $\W\subset \HGamma$
of dimension at most $N$, are able to achieve \emph{exponential rates of convergence} 
if the underlying weak solution exhibits $\B^2_{\mat\beta}(\Omega)$-regularity as provided in Proposition~\ref{prop:reg} above; see, e.g., \cite{ChSphp98} and the reference therein. We point out, however, that such results are based on \emph{assuming} that the corresponding nonlinear Galerkin equations are solved exactly. Evidently, this is an unrealistic hypothesis in the context of practical numerical solution methods for \emph{nonlinear} problems; indeed, in the absence of an exact nonlinear solver, the corresponding system of $\O(N)$ nonlinear algebraic equations for the unknowns in the Galerkin solution $u_N\in \W$ need to be solved approximately by some iterative process that must be run to an accuracy of the order of the best approximation error. 
This approach will be addressed in this section.

We begin by providing an abstract error analysis of approximate solutions 
in generic, closed Galerkin subspaces of $\HGamma$.
Subsequently, we consider the specific context of 
$hp$-FE discretizations in~\S\ref{sec:hpFEM}.
\subsection{Finite-dimensional Galerkin approximations}
In the sequel, let $\W \subset\HGamma$ denote any \emph{finite-dimensional} (and thus closed) subspace,
and consider the Galerkin discretization 
of the boundary value problem~\eqref{eq:sl} on $\W$ in weak form: 
find $\U\in \W$ such that
\begin{equation}\label{eq:weakGalerkin}
a(\U,v)+\lambda b(\U;v)=\int_\Omega fv\,\dx\qquad\forall v\in \W.
\end{equation}

The following result follows verbatim as in the proof of~Proposition~\ref{prop:weak}.

\begin{proposition}[Well-posedness of Galerkin discretization]\label{prop:weakW}
There exists a unique solution $\U\in \W$ 
of the discrete weak formulation~\eqref{eq:weakGalerkin}, 
and the stability bound $\NN{\U}_{\Ltwo}\le\rho$ holds, 
with $\rho$ from~\eqref{eq:rho}.
\end{proposition}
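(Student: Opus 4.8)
The plan is to mirror the proof of Proposition~\ref{prop:weak} verbatim, exploiting that $\W\subset\HGamma$ is finite-dimensional, hence closed, and therefore itself a Hilbert space under the $\HGamma$-inner product. First I would restrict the energy functional $\F$ to $\W$. Since the Sobolev embedding bound~\eqref{eq:Sobolev} holds for every $v\in\HGamma$, it holds a fortiori for every $v\in\W$, so $\F$ remains well-defined on $\W$ and the coercivity estimate
\[
\F(v)\ge\frac12\NN{v}^2_{\Ltwo}-C_q(\Omega)\|f\|_{\L{\nicefrac{2(q+1)}{(2q+1)}}}\NN{v}_{\Ltwo}
\]
is inherited without change; thus $\F|_{\W}$ is weakly coercive.

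Next I would record that the G\^ateaux derivative of $\F|_{\W}$ has, for $u,v\in\W$, the same form as in the continuous case, so that the discrete weak formulation~\eqref{eq:weakGalerkin} is equivalent to the Euler--Lagrange condition $\dprod{\F'(\U)}{v}=0$ for all $v\in\W$. Strict monotonicity of $\F'$ on $\W$ then follows exactly as before: for any $u\neq v$ in $\W\subset\HGamma$, the monotonicity property~\eqref{eq:bmonotone} gives
\[
\dprod{\F'(u)-\F'(v)}{u-v}\ge\NN{(u-v)}^2_{\Ltwo}>0.
\]
Applying the theorem on monotone potential operators, \cite[Thm.~25.F]{Zeidler:IIB}, now on the Hilbert space $\W$, then yields a unique $\U\in\W$ solving~\eqref{eq:weakGalerkin}.

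For the stability bound I would test with the admissible choice $v=\U\in\W$ in~\eqref{eq:weakGalerkin} and argue verbatim as in Proposition~\ref{prop:weak}, using~\eqref{eq:fbound} together with the definition~\eqref{eq:rho} of $\rho$, to obtain $\NN{\U}_{\Ltwo}\le\rho$. I expect no genuine obstacle here: the only point meriting (trivial) attention is that every ingredient---the Sobolev embedding, the monotonicity~\eqref{eq:bmonotone}, and the abstract existence-and-uniqueness result---is stable under restriction to the closed subspace $\W$, and that $v=\U$ is itself an admissible test function. Consequently the finite dimensionality of $\W$ plays no quantitative role, and the stability constant $\rho$ is the same as in the continuous setting.
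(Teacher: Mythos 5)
Your proposal is correct and coincides with the paper's argument: the paper dispenses with a separate proof entirely, stating that the result ``follows verbatim as in the proof of Proposition~\ref{prop:weak}'', which is precisely the restriction-to-$\W$ argument you spell out. Your only addition is making explicit the (routine) observations that $\W$ is closed, that all ingredients---the embedding~\eqref{eq:Sobolev}, the monotonicity~\eqref{eq:bmonotone}, and \cite[Thm.~25.F]{Zeidler:IIB}---survive restriction to $\W$, and that $v=\U$ is an admissible test function, all of which is exactly what the paper's ``verbatim'' claim relies on.
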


%
The Galerkin approximation $\U$ is a quasi-optimal approximation of $u$.
\begin{proposition}[Quasi-optimality]\label{prop:qo}
For the error between the solution $\u\in\HGamma$ of~\eqref{eq:weak} 
and 
its Galerkin approximation $\U\in \W$ of~\eqref{eq:weakGalerkin}, 
the following bound holds
\[
\NN{(\u-\U)}_{\Ltwo}\le C\inf_{w\in \W}\NN{(\u-w)}_{\Ltwo},
\]
for a constant $C>0$ depending only on $\lambda$, $q$, 
$\Omega$, and $f$ in~\eqref{eq:sl}.
\end{proposition}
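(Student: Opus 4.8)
The plan is to prove quasi-optimality by combining the coercivity of the principal part $a(\cdot,\cdot)$, the monotonicity \eqref{eq:bmonotone}, the local Lipschitz bound \eqref{eq:bLip}, and the uniform stability bounds $\NN{\u}_{\Ltwo}\le\rho$ and $\NN{\U}_{\Ltwo}\le\rho$ from Propositions~\ref{prop:weak} and~\ref{prop:weakW}. First I would exploit Galerkin orthogonality: since $\W\subset\HGamma$, both the continuous equation \eqref{eq:weak} and the discrete equation \eqref{eq:weakGalerkin} hold against every test function $v\in\W$, so subtracting gives
\begin{equation}\label{eq:GalOrth}
a(\u-\U,v)+\lambda\left(b(\u;v)-b(\U;v)\right)=0\qquad\forall v\in\W.
\end{equation}

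The strategy is then a Strang/C\'ea-type argument. I would fix an arbitrary $w\in\W$, write the error as $\u-\U=(\u-w)+(w-\U)$, and test \eqref{eq:GalOrth} with $v=w-\U\in\W$. The key algebraic step is to reorganise the nonlinear contribution into a \emph{monotone} piece plus a remainder. Specifically, I would add and subtract $b(\u;\cdot)$ so that
\[
b(\u;w-\U)-b(\U;w-\U)
=\bigl(b(\u;\u-\U)-b(\U;\u-\U)\bigr)-\bigl(b(\u;\u-w)-b(\U;\u-w)\bigr).
\]
The first bracket is nonnegative by the monotonicity property \eqref{eq:bmonotone} (with the roles $u\leftrightarrow\u$, $v\leftrightarrow\U$), so it can be discarded after moving it to the favourable side of the inequality; the second bracket is controlled by the local Lipschitz bound \eqref{eq:bLip}. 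Together with $a(w-\U,w-\U)=\NN{(w-\U)}^2_{\Ltwo}$ and $a(\u-w,w-\U)\le\NN{(\u-w)}_{\Ltwo}\NN{(w-\U)}_{\Ltwo}$, this yields, after dividing by $\NN{(w-\U)}_{\Ltwo}$,
\[
\NN{(w-\U)}_{\Ltwo}
\le\NN{(\u-w)}_{\Ltwo}
+\lambda(2q+1)C_q(\Omega)^{2(q+1)}\bigl(\NN{\u}_{\Ltwo}+\NN{(\u-\U)}_{\Ltwo}\bigr)^{2q}\NN{(\u-\U)}_{\Ltwo}.
\]

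At this point the crucial observation is that the argument of the factor $(\,\cdot\,)^{2q}$ in \eqref{eq:bLip} can be bounded \emph{uniformly in $\W$}: using $\NN{\u}_{\Ltwo}\le\rho$ and $\NN{\U}_{\Ltwo}\le\rho$ gives $\NN{(\u-\U)}_{\Ltwo}\le 2\rho$, hence $\NN{\u}_{\Ltwo}+\NN{(\u-\U)}_{\Ltwo}\le 3\rho$, and the whole nonlinear prefactor is dominated by a constant $L:=\lambda(2q+1)C_q(\Omega)^{2(q+1)}(3\rho)^{2q}$ depending only on $\lambda$, $q$, $\Omega$ and $f$ (through $\rho$). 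Applying the triangle inequality $\NN{(\u-\U)}_{\Ltwo}\le\NN{(\u-w)}_{\Ltwo}+\NN{(w-\U)}_{\Ltwo}$ and inserting the displayed bound, I would collect the two occurrences of $\NN{(\u-\U)}_{\Ltwo}$ on the left-hand side to obtain $(1-2L)\NN{(\u-\U)}_{\Ltwo}\le(2+2L)\NN{(\u-w)}_{\Ltwo}$; taking the infimum over $w\in\W$ then finishes the proof with $C=(2+2L)/(1-2L)$.

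The main obstacle I anticipate is precisely this last contraction bookkeeping: the bound I derive for $\NN{(\u-\U)}_{\Ltwo}$ contains $\NN{(\u-\U)}_{\Ltwo}$ on its own right-hand side (both inside the $2q$-th power and as a linear factor), so a naive absorption only works if the resulting coefficient $2L$ is strictly less than $1$. In the regime $q\ge 1$ or large $\lambda$ this is not guaranteed, so I would treat the power factor and the linear factor separately: the power factor $(\NN{\u}_{\Ltwo}+\NN{(\u-\U)}_{\Ltwo})^{2q}$ is bounded by the constant $(3\rho)^{2q}$ \emph{unconditionally} via the a~priori stability bounds, leaving only a single linear occurrence of $\NN{(\u-\U)}_{\Ltwo}$ to absorb; this single linear term is, however, multiplied by the best-approximation error structure inherited from splitting $\u-\U$, and one must verify that the absorption constant is independent of $\W$. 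Since the stability bounds $\NN{\u}_{\Ltwo},\NN{\U}_{\Ltwo}\le\rho$ hold on \emph{every} finite-dimensional subspace $\W$ with the same $\rho$, the constant $C$ is indeed uniform in $N$, as required.
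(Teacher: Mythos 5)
Your toolkit is the same as the paper's (Galerkin orthogonality, splitting the nonlinear contribution into a monotone piece dropped via~\eqref{eq:bmonotone} plus a remainder controlled by~\eqref{eq:bLip}, and the uniform stability bounds $\NN{\u}_{\Ltwo},\NN{\U}_{\Ltwo}\le\rho$ to freeze the power factor at $(3\rho)^{2q}$), and your intermediate estimate $\NN{(w-\U)}_{\Ltwo}\le\NN{(\u-w)}_{\Ltwo}+L\,\NN{(\u-\U)}_{\Ltwo}$ with $L:=\lambda(2q+1)C_q(\Omega)^{2(q+1)}(3\rho)^{2q}$ is correct. The gap is the closing step: the absorption $(1-2L)\NN{(\u-\U)}_{\Ltwo}\le(2+2L)\NN{(\u-w)}_{\Ltwo}$ only yields a meaningful constant under the smallness condition $2L<1$, which is \emph{not} assumed and genuinely fails for large $\lambda$, large $q$, or large $\|f\|$ (through $\rho$), whereas Proposition~\ref{prop:qo} claims the bound for all $\lambda>0$ and all $q\in\mathbb{N}_0$. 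Your proposed remedy does not repair this: bounding the power factor by $(3\rho)^{2q}$ is already what produces the constant $L$, and the remaining linear occurrence of $\NN{(\u-\U)}_{\Ltwo}$ is multiplied by the best-approximation error only \emph{before} you divide by $\NN{(w-\U)}_{\Ltwo}$ --- the division is exactly where you discard the factor $\NN{(\u-w)}_{\Ltwo}$ that would have made the absorption unconditional.

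The paper avoids absorption altogether by a different splitting: with $\errex:=\u-\U$, it expands $\NN{\errex}^2_{\Ltwo}=a(\errex,\u-w)+a(\errex,w-\U)$, replaces the second term via Galerkin orthogonality, and then re-splits $w-\U=(\u-\U)+(w-\u)$ so that the monotone part is tested with $\u-\U$ (and dropped) while the Lipschitz part is tested with $w-\u$. Then~\eqref{eq:bLip} gives the remainder the form $L\,\NN{\errex}_{\Ltwo}\NN{(\u-w)}_{\Ltwo}$, so that
\[
\NN{\errex}^2_{\Ltwo}\le(1+L)\,\NN{\errex}_{\Ltwo}\NN{(\u-w)}_{\Ltwo},
\]
and dividing by $\NN{\errex}_{\Ltwo}$ finishes the proof unconditionally with $C=1+L$. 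Alternatively, your own route can be salvaged by not dividing too early: from your pre-division inequality $\NN{(w-\U)}^2_{\Ltwo}\le\NN{(\u-w)}_{\Ltwo}\NN{(w-\U)}_{\Ltwo}+L\,\NN{(\u-\U)}_{\Ltwo}\NN{(\u-w)}_{\Ltwo}$, insert $\NN{(\u-\U)}_{\Ltwo}\le\NN{(\u-w)}_{\Ltwo}+\NN{(w-\U)}_{\Ltwo}$ and use Young's inequality to absorb $\tfrac12\NN{(w-\U)}^2_{\Ltwo}$; this yields $\NN{(w-\U)}_{\Ltwo}\le C(L)\NN{(\u-w)}_{\Ltwo}$ for every $L>0$, and the triangle inequality concludes. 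As written, however, your final step does not prove the stated proposition.
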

\begin{proof}
Introducing the error $\errex:=\u-\U$, 
we observe the \emph{Galerkin orthogonality} of $\errex$:
\[
a(\errex,v)+\lambda b(\u;v)-\lambda b(\U;v)=0\qquad\forall v\in \W.
\]
Thereby, for any $w\in \W$, the following identity holds
\begin{align*}
\NN{\errex}^2_{\Ltwo}
&=a(\errex,\u-w)+a(\errex,w-\U)\\
&=a(\errex,\u-w)-\lambda b(\u;w-\U)+\lambda b(\U;w-\U)\\
&=a(\errex,\u-w)-\lambda b(\u;\u-\U)+\lambda b(\U;\u-\U)
-\lambda b(\u;w-\u)+\lambda b(\U;w-\u),
\end{align*}
for the exact Galerkin approximation $\U \in \W$. 
Exploiting the monotonicity property stated in~\eqref{eq:bmonotone}, 
we observe that
\[
\NN{\errex}^2_{\Ltwo}
\le
\NN{\errex}_{\Ltwo}\NN{(\u-w)}_{\Ltwo}
-\lambda b(\u;w-\u)+\lambda b(\U;w-\u).
\]
Recalling the stability estimate \eqref{eq:ustab}, 
which holds for both $\u$ and $\U$, cf.~Proposition~\ref{prop:weakW}, 
and applying~\eqref{eq:bLip}, 
we deduce that
\[
\NN{\errex}_{\Ltwo}
\le
\NN{(\u-w)}_{\Ltwo}
+\lambda
(2q+1)C_q(\Omega)^{2(q+1)}\left(3\rho\right)^{2q}
\|\nabla(\u-w)\|_{\Ltwo},
\]
with $\rho$ from~\eqref{eq:rho}. 
Since $w\in \W$ was chosen arbitrarily, the proof is complete.
\end{proof}
\subsection{Iterative solution}
We consider an iterative linearization 
for the solution of the discrete Galerkin formulation~\eqref{eq:weakGalerkin} 
in terms of a \emph{Picard scheme (also called Zarantanello iteration)}, 
cf.~\cite{HeidWihler:20,CongreveWihler:17}: 
starting from 
any initial guess $\U_0\in\HGamma$, 
for a (fixed) parameter $0<\alpha\le 1$, 
consider a sequence 
$\{\uW_{n}\}_{n\ge 1}\subset \W$ 
that is generated by the iteration
\begin{equation}\label{eq:weakWit}
a(\U_{n+1},v)
=
(1-\alpha)a(\U_n,v)
+\alpha \left(\int_\Omega fv\,\dx-\lambda b(\U_n;v)\right)
\qquad\forall v\in \W,
\end{equation}
for $n\ge 0$.

\begin{proposition}[Contraction of ILG procedure]\label{prop:itconv}
For any initial guess $\U_0\in\HGamma$, the iteration~\eqref{eq:weakWit}, 
with 
\begin{equation}\label{eq:alpha}
\alpha\in(0,1]\cap(0,\nicefrac{2}{(L^2+1)}),
\end{equation}
where
\begin{equation}\label{eq:L}
L:=\lambda
(2q+1)C_q(\Omega)^{2(q+1)}\left(2\rho+\|\nabla\U_0\|_{\Ltwo}\right)^{2q},
\end{equation}
satisfies the bound
\[
\NN{(\U-\U_{n+1})}_{\Ltwo}\le
r^{n+1}_\alpha\|\nabla(\U-\U_0)\|_{\Ltwo}\qquad\forall n\ge 0,
\]
with the contraction constant
\begin{equation}\label{eq:q}
r_\alpha:=
\sqrt{(1-\alpha)^2
+\alpha^2L^2}<1.
\end{equation}
Here, $\U\in \W$ 
is the solution of the Galerkin formulation~\eqref{eq:weakGalerkin},  
$\rho$ is defined in~\eqref{eq:rho}, 
and $C_q(\Omega)$ is the Sobolev embedding constant from~\eqref{eq:Sobolev}.
In particular, 
the iteration~\eqref{eq:weakWit} converges strongly in $\HGamma$, i.e.,
$
\lim_{n\to\infty}\NN{(\U-\U_n)}_{\Ltwo}=0.
$
\end{proposition}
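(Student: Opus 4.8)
The plan is to track the iteration errors $e_n:=\U-\U_n$ in the energy norm $\NN{\cdot}_{\Ltwo}$ --- which is the norm induced by the inner product $a(\cdot,\cdot)$ on $\HGamma$, using that $\cD\neq\emptyset$ --- and to establish a one-step contraction $\NN{e_{n+1}}_{\Ltwo}\le r_\alpha\NN{e_n}_{\Ltwo}$. First I would record that, since $\U\in\W$ solves~\eqref{eq:weakGalerkin}, it is a fixed point of the iteration map: for every $v\in\W$,
\[
a(\U,v)=(1-\alpha)a(\U,v)+\alpha\Big(\int_\Omega fv\,\dx-\lambda b(\U;v)\Big).
\]
Subtracting~\eqref{eq:weakWit} from this identity yields the error recursion $a(e_{n+1},v)=(1-\alpha)a(e_n,v)-\alpha\lambda\big(b(\U;v)-b(\U_n;v)\big)$ for all $v\in\W$. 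As the right-hand side is a bounded linear functional on the finite-dimensional space $\W$, the Riesz representation theorem (with respect to $a(\cdot,\cdot)$) furnishes a unique $\phi_n\in\W$ satisfying $a(\phi_n,v)=\lambda\big(b(\U;v)-b(\U_n;v)\big)$; since the iterates satisfy $\U_n\in\W$ (and one may assume $\U_0\in\W$, e.g.\ $\U_0=0$), the errors $e_n,e_{n+1}$ lie in $\W$, and testing the recursion against elements of $\W$ gives the explicit identity $e_{n+1}=(1-\alpha)e_n-\alpha\phi_n$ in $\W$.

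Next I would expand the squared energy norm of this identity,
\[
\NN{e_{n+1}}_{\Ltwo}^2=(1-\alpha)^2\NN{e_n}_{\Ltwo}^2-2\alpha(1-\alpha)\,a(e_n,\phi_n)+\alpha^2\NN{\phi_n}_{\Ltwo}^2.
\]
The decisive point is the sign of the cross term: by construction $a(e_n,\phi_n)=\lambda\big(b(\U;e_n)-b(\U_n;e_n)\big)$, which, with $e_n=\U-\U_n$, is exactly the quantity appearing in the monotonicity property~\eqref{eq:bmonotone} and is therefore nonnegative; since $\alpha\in(0,1]$ forces $\alpha(1-\alpha)\ge0$, the cross term may simply be discarded. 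This is precisely what produces the sharp Pythagorean factor $r_\alpha=\sqrt{(1-\alpha)^2+\alpha^2L^2}$ rather than the coarser $(1-\alpha)+\alpha L$ that a direct Cauchy--Schwarz estimate of $a(e_n,\phi_n)$ would yield.

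It then remains to bound $\NN{\phi_n}_{\Ltwo}=\sup_{\NN{v}_{\Ltwo}=1}\lambda\big|b(\U;v)-b(\U_n;v)\big|$ by $L\NN{e_n}_{\Ltwo}$ via the local Lipschitz estimate~\eqref{eq:bLip} (applied with $u=\U$, $w=\U_n$, so that $u-w=e_n$). Here lies the main obstacle: the prefactor in~\eqref{eq:bLip} is $\big(\NN{\U}_{\Ltwo}+\NN{e_n}_{\Ltwo}\big)^{2q}$, so it depends on the very error we are trying to estimate, and a naive reading would make the contraction factor change from step to step. I would resolve this self-referential dependence by an induction on $n$ with hypothesis $\NN{e_n}_{\Ltwo}\le\NN{e_0}_{\Ltwo}$: combining the Galerkin stability bound $\NN{\U}_{\Ltwo}\le\rho$ from Proposition~\ref{prop:weakW} with $\NN{e_0}_{\Ltwo}=\NN{(\U-\U_0)}_{\Ltwo}\le\rho+\NN{\U_0}_{\Ltwo}$ gives the uniform estimate $\NN{\U}_{\Ltwo}+\NN{e_n}_{\Ltwo}\le2\rho+\NN{\U_0}_{\Ltwo}$, whence $\NN{\phi_n}_{\Ltwo}\le L\NN{e_n}_{\Ltwo}$ with $L$ exactly as in~\eqref{eq:L}. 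Substituting this into the expanded identity gives $\NN{e_{n+1}}_{\Ltwo}^2\le\big((1-\alpha)^2+\alpha^2L^2\big)\NN{e_n}_{\Ltwo}^2=r_\alpha^2\NN{e_n}_{\Ltwo}^2$; since the admissibility condition~\eqref{eq:alpha} is equivalent to $\alpha(1+L^2)<2$, i.e.\ to $r_\alpha<1$, this both closes the induction (as $\NN{e_{n+1}}_{\Ltwo}\le r_\alpha\NN{e_n}_{\Ltwo}\le\NN{e_0}_{\Ltwo}$) and, by telescoping, delivers $\NN{e_{n+1}}_{\Ltwo}\le r_\alpha^{\,n+1}\NN{e_0}_{\Ltwo}$; strong convergence in $\HGamma$ follows at once by letting $n\to\infty$.
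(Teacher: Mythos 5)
Your argument is correct and is essentially the paper's own proof: the same Riesz representer of the nonlinear increment (you absorb the factor $\lambda$ into it, the paper keeps it outside), the same Pythagorean expansion of $\NN{e_{n+1}}^2_{\Ltwo}$ whose cross term is discarded via the monotonicity~\eqref{eq:bmonotone} together with $\alpha\in(0,1]$, and the same induction to freeze the prefactor of the local Lipschitz bound~\eqref{eq:bLip} at the uniform constant $L$ of~\eqref{eq:L} using $\NN{\U}_{\Ltwo}\le\rho$ and $\NN{(\U-\U_0)}_{\Ltwo}\le\rho+\NN{\U_0}_{\Ltwo}$ (your weaker hypothesis $\NN{e_n}_{\Ltwo}\le\NN{e_0}_{\Ltwo}$ suffices just as the paper's $\NN{e_k}_{\Ltwo}\le Rr_\alpha^k$ does). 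Your one deviation is the explicit reduction to $\U_0\in\W$ so that $e_0\in\W$ and the recursion $e_{n+1}=(1-\alpha)e_n-\alpha\phi_n$ holds as an identity; the proposition allows arbitrary $\U_0\in\HGamma$, and the paper's proof silently has the same issue at $n=0$ (both the pointwise identity and inserting $v=\U-\U_0$ into~\eqref{eq:Res} strictly require $e_0\in\W$), so you flag, rather than introduce, this glossed-over point.
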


\begin{remark}
For the value $\alpha_\star:=\left(1+L^2\right)^{-1}<1$, with $L$ from~\eqref{eq:L}, 
we note that 
$
r_{\min}:=r_{\alpha_\star}=L(1+L^2)^{-\nicefrac{1}{2}}<1
$
is the minimal value of $r_{\alpha}$ in~\eqref{eq:q}.
\end{remark}

\begin{proof}[Proof of Proposition~\ref{prop:itconv}]
We proceed by induction with respect to $n\ge0$. To this end, 
we follow along the lines of the classical theory for monotone operators,
see, e.g., \cite[\S3.3]{Necas:83}, whereby we work with a \emph{local} Lipschitz continuity property similar to the analysis in~\cite{BeckerBrunnerInnerbergerMelenkPraetorius:23}.
Let $R:=\|\nabla(\U-\U_0)\|_{\Ltwo}$, and suppose that
\begin{equation}\label{eq:ind}
\NN{(\U-\U_k)}_{\Ltwo}\le Rr_\alpha^k,\qquad\text{for }k=0,\ldots,n,
\end{equation}
which is obviously true for $n=0$. 
Then, by virtue of Remark~\ref{rem:bbound}, 
we are able to define a unique (Riesz representer) $\Res_n\in \W$ 
by the weak formulation
\begin{equation}\label{eq:Res}
a(\Res_n,v)=b(\U;v)-b(\U_n;v)\qquad\forall v\in \W.
\end{equation}
In light of~\eqref{eq:bLip} we note that 
\begin{multline*}
|b(\U;v)-b(\U_n;v)|\\
\le
(2q+1)C_q(\Omega)^{2(q+1)}\left(\NN{\U}_{\Ltwo}+\NN{(\U-\U_n)}_{\Ltwo}\right)^{2q}
 \NN{(\U-\U_n)}_{\Ltwo}\NN{v}_{\Ltwo},
\end{multline*}
for any $v\in\HGamma$. 
Furthermore, by virtue of Proposition~\ref{prop:weakW} 
and due to the induction assumption~\eqref{eq:ind}, 
it follows that
\begin{equation}\label{eq:ind2}
|b(\U;v)-b(\U_n;v)|
\le
(2q+1)C_q(\Omega)^{2(q+1)}\left(\rho+R\right)^{2q}
 \NN{(\U-\U_n)}_{\Ltwo}\NN{v}_{\Ltwo},
\end{equation}
for any $v\in\HGamma$, i.e., the right-hand side of~\eqref{eq:Res} 
is a bounded linear functional with respect to~$v$, 
and thus, $\Res_n\in \W$ is well-defined. 
In addition, letting $v=\Res_n$ in~\eqref{eq:Res} and~\eqref{eq:ind2}, 
we infer that
\begin{equation}\label{eq:Resbound}
\NN{\Res_n}_{\Ltwo}\le(2q+1)C_q(\Omega)^{2(q+1)}\left(\rho+R\right)^{2q}
 \NN{(\U-\U_n)}_{\Ltwo}.
\end{equation}
Moreover, 
upon subtracting the weak formulations \eqref{eq:weakGalerkin} and~\eqref{eq:weakWit}, 
we observe that
\begin{align*}
a(\U-\U_{n+1},v)
&=(1-\alpha)a(\U-\U_n,v)
+\alpha\lambda \left(b(\U_n;v)-b(\U;v)\right)\\
&=(1-\alpha)a(\U-\U_n,v)
-\alpha\lambda a(\Res_n,v),
\end{align*}
for any $v\in \W$, which implies the identity
\[
\U-\U_{n+1}
=(1-\alpha)(\U-\U_n)-\alpha\lambda\Res_n.
\]
Thus,
\begin{multline*}
\NN{(\U-\U_{n+1})}^2_{\Ltwo}\\
=(1-\alpha)^2\NN{(\U-\U_{n})}^2_{\Ltwo}
-2\alpha(1-\alpha)\lambda a(\Res_n,\U-\U_n)
+\alpha^2\lambda^2\NN{\Res_n}^2_{\Ltwo}.
\end{multline*}
Inserting $v=\U-\U_n$ in~\eqref{eq:Res}, and using~\eqref{eq:bmonotone}, we notice that
$
a(\Res_n,\U-\U_n)\ge 0.
$
Therefore, recalling~\eqref{eq:Resbound}, we deduce that
\begin{align*}
\NN{(\U-\U_{n+1})}^2_{\Ltwo}
\le\left((1-\alpha)^2
+\alpha^2\lambda^2
(2q+1)^2C_q(\Omega)^{4(q+1)}\left(\rho+R\right)^{4q}\right)
 \NN{(\U-\U_n)}^2_{\Ltwo}.
\end{align*}
Employing Proposition~\ref{prop:weakW}, gives
\[
R\le\NN{\U}_{\Ltwo}+\NN{\U_0}_{\Ltwo}\le \rho+\NN{\U_0}_{\Ltwo},
\]
and thereby the bound $\NN{(\U-\U_{n+1})}_{\Ltwo}\le Rr_\alpha^{n+1}$ holds,
which concludes the induction argument.
\end{proof}
\subsection{Exponential convergence of the ILG iteration} 
\label{sec:ExpConv}
We consider sequences $\{\W\}_{N\ge 1}\subset\HGamma$ of 
subspaces $\W$ of finite dimension $\dim(\W)\sim N$ 
which allow for exponentially convergent approximations of functions 
$u\in \B^2_{\mat\beta'}(\Omega)$, where we set $\mat\beta':=(\beta'_1,\dots,\beta'_m)\in[0,1)^m$ with
\begin{equation}
\label{eq:wbeta}
\beta'_i:=\frac12\left(1+\max(\beta_i,1-\nicefrac{\pi}{\omega_i})\right),\qquad 1\le i\le m.
\end{equation}
More precisely, 
we suppose that there exist constants $K,b,\gamma>0$ independent of $N$ 
with
\begin{equation}\label{eq:exp}
\inf_{w\in\W}\NN{(u-w)}_{\Ltwo}
\le 
K\exp\left(-bd_N^{\gamma}\right), 
\qquad\text{where} \;\;
d_N \geq \dim(\W),\; N\ge 0
\;.
\end{equation}
Specific choices of $\{\W \}_{N\geq 1}$ that realize \eqref{eq:exp} with $\gamma = \nicefrac13$ 
will be given in \S\ref{sec:hpFEM} below.
\begin{theorem}[Convergence of Picard iteration]
\label{thm:ILGConv}
Suppose that the right-hand side in~\eqref{eq:sl} satisfies $f\in\B^0_{\mat\beta}(\Omega)$. 
Furthermore, consider a family of Galerkin spaces~$\{\W\}_{N\ge 1}$ 
that features the exponential approximation property~\eqref{eq:exp}. 
Then, for any fixed $N\ge 1$ and any initial guess $\U_0\in \HGamma$, 
performing $n=\mathcal{O}\left(d_N^\gamma\right)$ steps 
of the Picard iteration scheme~\eqref{eq:weakWit}, 
with suitable $\alpha\in(0,1]$ cf.~Proposition~\ref{prop:itconv}, 
leads to the error estimate
\begin{equation}\label{eq:hpILGErr}
\NN{(\u-\U_n)}_{\Ltwo}\le C\exp\left(-\kappa d_N^{\gamma}\right),
\end{equation}
where $\u\in \HGamma$ is the weak solution of~\eqref{eq:sl}, 
with constants $C,\kappa>0$ that are independent of $N$, 
and with $\gamma>0$ from~\eqref{eq:exp}.
\end{theorem}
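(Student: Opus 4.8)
The plan is to control the total error $\NN{(\u-\U_n)}_{\Ltwo}$ by splitting it, via the triangle inequality, into the \emph{discretization error} $\NN{(\u-\U)}_{\Ltwo}$ between the exact weak solution $\u$ and the exact Galerkin solution $\U\in\W$, and the \emph{iteration error} $\NN{(\U-\U_n)}_{\Ltwo}$ produced by the Picard scheme. I would bound each contribution separately by a quantity of the form $\exp(-bd_N^\gamma)$, and then choose the iteration count $n$ just large enough that the iteration error is balanced against the (fixed) exponential approximation rate, thereby yielding~\eqref{eq:hpILGErr}.

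For the discretization error I would invoke the regularity result of Proposition~\ref{prop:reg}: since $f\in\B^0_{\mat\beta}(\Omega)$, the weak solution satisfies $\u\in\B^2_{\mat\beta}(\Omega)$. By the definition of $\mat\beta'$ in~\eqref{eq:wbeta} one has $\beta'_i>\beta_i$ for every $i$, so that the larger (hence weaker) corner weights give the embedding $\B^2_{\mat\beta}(\Omega)\subset\B^2_{\mat\beta'}(\Omega)$; consequently $\u$ also lies in the class $\B^2_{\mat\beta'}(\Omega)$ for which the exponential approximation property~\eqref{eq:exp} is assumed. Combining the quasi-optimality bound of Proposition~\ref{prop:qo} with~\eqref{eq:exp} then gives
\[
\NN{(\u-\U)}_{\Ltwo}\le C\inf_{w\in\W}\NN{(\u-w)}_{\Ltwo}\le CK\exp\left(-bd_N^\gamma\right),
\]
with $C,K,b,\gamma$ all independent of $N$.

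For the iteration error I would apply the contraction estimate of Proposition~\ref{prop:itconv}. Since the initial guess $\U_0\in\HGamma$ and the stability radius $\rho$ of~\eqref{eq:rho} are both independent of $N$, the constant $L$ in~\eqref{eq:L}, and hence the contraction rate $r_\alpha<1$ in~\eqref{eq:q}, is a fixed number independent of $N$; moreover $\NN{(\U-\U_0)}_{\Ltwo}\le\rho+\NN{\U_0}_{\Ltwo}$ uniformly in $N$ by Proposition~\ref{prop:weakW}. Writing $r_\alpha^n=\exp(-n|\log r_\alpha|)$, I would then select
\[
n=\left\lceil\frac{b}{|\log r_\alpha|}\,d_N^\gamma\right\rceil=\O\left(d_N^\gamma\right),
\]
which forces $r_\alpha^n\le\exp(-bd_N^\gamma)$ and hence $\NN{(\U-\U_n)}_{\Ltwo}\le(\rho+\NN{\U_0}_{\Ltwo})\exp(-bd_N^\gamma)$. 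Adding the two bounds and absorbing the $N$-independent prefactors into a single constant $C$ yields~\eqref{eq:hpILGErr} with $\kappa=b$ (or any $\kappa\le b$).

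The only genuinely delicate point is the uniformity in $N$ of the contraction rate $r_\alpha$: were $r_\alpha$ to deteriorate towards $1$ as $N$ grows, the required iteration count $n$ would no longer be $\O(d_N^\gamma)$ and the balancing would break down. The energy-based analysis underlying Proposition~\ref{prop:itconv} is precisely what avoids this, since its Lipschitz constant $L$ depends only on $\rho$ and the \emph{fixed} initial datum $\U_0$, not on $\dim(\W)$. Everything else, namely the embedding of the weighted analytic classes and the conversion of the geometric factor $r_\alpha^n$ into an exponential in $d_N^\gamma$, is routine once this $N$-independence is in place.
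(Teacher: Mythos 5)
Your architecture is the same as the paper's: split $\NN{(\u-\U_n)}_{\Ltwo}$ by the triangle inequality, bound the discretization error via quasi-optimality (Proposition~\ref{prop:qo}) combined with~\eqref{eq:exp}, bound the iteration error via the contraction of Proposition~\ref{prop:itconv}, and choose $n=\O\left(d_N^\gamma\right)$ to balance $r_\alpha^n$ against $\exp\left(-bd_N^{\gamma}\right)$; your emphasis on the $N$-independence of $L$, $r_\alpha$, and $\NN{(\U-\U_0)}_{\Ltwo}$ is exactly the right point and matches the paper's reasoning. There is, however, one genuine flaw in your regularity step: you invoke Proposition~\ref{prop:reg} with the weight $\mat\beta$ itself to claim $\u\in\B^2_{\mat\beta}(\Omega)$. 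That proposition holds only under the corner-weight admissibility condition~\eqref{eq:DN}, which the hypothesis of Theorem~\ref{thm:ILGConv} (merely $\mat\beta\in[0,1)^m$) does not guarantee; and your intermediate claim can actually be false --- e.g.\ for $\mat\beta=\mat 0$ on the L-shaped domain, membership in $\B^2_{\mat 0}(\Omega)$ would entail $\H^2$-regularity up to the reentrant corner, which the corner singularity of $\u$ excludes. Your subsequent embedding $\B^2_{\mat\beta}(\Omega)\subset\B^2_{\mat\beta'}(\Omega)$ is correct in direction (larger exponents weaken the class), but it is applied to an unestablished, possibly false, premise.

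The repair uses your own ingredients in the opposite order, and is precisely what the paper's proof does: since $\beta'_i>\beta_i$ for all $i$ (as $\beta_i<1$ implies $\tfrac12(1+\beta_i)>\beta_i$), first embed the \emph{data}, $f\in\B^0_{\mat\beta}(\Omega)\subset\B^0_{\mat\beta'}(\Omega)$; then note that the definition~\eqref{eq:wbeta} is designed exactly so that $\mat\beta'$ fulfills~\eqref{eq:DN}, since $\beta'_i\ge\tfrac12\left(1+\left(1-\nicefrac{\pi}{\omega_i}\right)\right)=1-\nicefrac{\pi}{2\omega_i}\ge 1-\nicefrac{\pi}{\omega_i}$; hence Proposition~\ref{prop:reg} applies with the weight $\mat\beta'$ and yields directly $u\in\B^2_{\mat\beta'}(\Omega)$, which is the class for which the exponential approximation property~\eqref{eq:exp} is assumed. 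With this one-line reordering, the remainder of your argument is complete and coincides with the paper's proof of Theorem~\ref{thm:ILGConv}.
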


\begin{proof}
If $\mat\beta'\in(0,1)^m$ is defined as in~\eqref{eq:wbeta} 
then $f\in \B^0_{\mat\beta'}(\Omega)$. 
Owing to Proposition~\ref{prop:reg}, 
we note that the solution of~\eqref{eq:weak} satisfies~$u\in\B^2_{\mat\beta'}(\Omega)$. 
Then, for any subspace $\W$ and any $n\ge 0$, using the triangle inequality, 
we have
\[
\NN{(\u-\U_n)}_{\Ltwo}
\le
\NN{(\u-\U)}_{\Ltwo}
+
\NN{(\U-\U_n)}_{\Ltwo},
\]
where $\U\in\W$ is the solution of~\eqref{eq:weakGalerkin}. 
The first term on the right-hand side of the above inequality 
can be estimated by combining the quasi-optimality property from Proposition~\ref{prop:qo} 
and the exponential approximation bound~\eqref{eq:exp}, 
thereby yielding
\begin{equation} \label{eq:exp2}
\NN{(\u-\U)}_{\Ltwo}\le C_1\exp\left(-bd_N^{\gamma}\right),
\end{equation}
for a constant $C_1>0$. 
Furthermore, the second term is bounded by employing 
Proposition~\ref{prop:itconv}, i.e., we have that
\begin{equation}\label{eq:aux1}
\NN{(\U-\U_n)}_{\Ltwo}\le C_2\, r_\alpha^n,
\end{equation}
with constants $0 < r_\alpha < 1$ and $C_2>0$. 
Choosing $n=\mathcal{O}\left(d_N^\gamma\right)$ shows 
that 
\begin{equation}\label{eq:aux2}
r_\alpha^n \le \exp\left(-b' d_N^{\gamma}\right),
\end{equation}
for some $b'>0$ independent of $n$ and $N$,
which completes the proof.
\end{proof}
\section{Exponential Convergence and $\eps$-Complexity of $hp$-ILGFEM}
\label{sec:hpFEM}
%
Based on the convergence result for the Picard iteration~\eqref{eq:weakWit} from
Theorem~\ref{thm:ILGConv}, we are ready to formulate 
a fully discrete ILG scheme, and to establish its 
exponential convergence, respectively, polylogarithmic $\eps$-complexity 
for the numerical approximation of the semilinear boundary value problem \eqref{eq:sl} 
in a polygon $\Omega$ with analytic data $f\in \B^0_\beta(\Omega)$.
We first discuss 
the design of (sequences of) so-called $hp$-FE spaces $\W$,
which, together with the analytic regularity $u\in \B^2_{\mat\beta'}(\Omega)$ 
stated in Proposition~\ref{prop:reg}, affords the exponential convergence rate
\eqref{eq:exp} with $\gamma = \nicefrac13$ for the corresponding exact Galerkin solutions
$U \in \W$, cf.~\eqref{eq:weakGalerkin}.
We then address the realization of the iteration \eqref{eq:weakWit},
in particular, the solver complexity of the linear part based on the
$hp$-FE spaces $\W$.  
Here, we emphasize that the Laplace operator
and the monomial nonlinearity in \eqref{eq:sl} allow for an \emph{exact computation}
(assuming absence of rounding errors)
of all bilinear and nonlinear forms (in terms of $a(\cdot,\cdot)$ and $b(\cdot;\cdot)$, respectively) 
in the ILG iteration \eqref{eq:weakWit},
and thus, to dispense with a quadrature error analysis. 
%
\subsection{$hp$-Approximations on geometric corner meshes}
\label{sec:hpGeoCorMesh}
Under the analytic $\B^2_{\mat\beta}(\Omega)$-regularity  
in scales of corner-weighted Sobolev spaces 
of the weak solution $u\in\HGamma$ of \eqref{eq:sl}, 
which has been shown in \cite{HS22_1031}, see also \S\ref{sec:AnReg},
subspace sequences $\{ \W \}_{N\geq1}$ in \eqref{eq:weakGalerkin} 
that satisfy~\eqref{eq:exp} with $\gamma = \nicefrac13$ 
have been
constructed in \cite{FS20_2675,ChSphp98} and the references cited therein. 
We briefly recapitulate their construction here. 

For integers $k\geq 1$, we denote by $\cT_k$ 
a regular, simplicial partition of the polygon $\Omega$ 
into at most $\O(k)$ open triangles $T$. 
We assume that the partitions $\{ \cT_k \}_{k\geq 1}$ 
are nested, uniformly shape regular, 
and 
\emph{geometrically refined} to the corners 
$\set{C}=\{\mat c_i\}_{i=1}^m\subset\partial\Omega$ of the polygon~$\Omega$; 
the last property refers to the existence of constants 
$C>0$ and  $\sigma \in (0,1)$
such that it holds
\begin{subequations}\label{eq:Geo}
\begin{align}
\forall k \;\;\forall T\in\cT_k: \overline{T} \cap {\set C} = \emptyset : &
\qquad  
0  < \sigma \leq \nicefrac{\diam(T)}{\dist(T, \set C)} \leq \sigma^{-1} 
\;,\label{eq:Geo1}
\intertext{and}
\forall k \;\; \forall T\in \cT_k: \overline{T}\cap {\set C} \ne \emptyset: & 
\qquad
\diam(T) \leq C \sigma^k
\;.\label{eq:Geo2}
\end{align}
\end{subequations}
Geometric partitions satisfying the above conditions can be constructed 
in any polygon $\Omega$ by recursive bisection refinement 
departing from a regular, admissible initial triangulation $\cT_1$ of~$\Omega$.
We shall refer to triangulations generated in this way as 
\emph{geometric corner meshes} in~$\Omega$.
The aforementioned recursive bisection refinement constructions yield 
the existence of a constant $C>0$ (depending on~$\Omega$ and on $\cT_1$) 
such that
$$
\forall k\in \N:\quad \#(\cT_k) \leq C k \;.
$$

With the geometric corner meshes in hand,
the spaces $\W$ of dimension $d_N=\O(N)$
consist of continuous functions in $\overline{\Omega}$ whose restriction to each element $T\in \cT_k$, $k\geq 1$, 
are polynomials of uniform total polynomial degree $p_k\geq 1$, 
and obey the homogenous Dirichlet boundary condition~\eqref{eq:Dirichlet}. 
The spaces $\W$ are built 
on the sequences $\{ \cT_k \}_{k\geq 1}$ of geometric corner meshes
\begin{equation}\label{eq:WNhpFEM}
\W = 
\Spk{\cT_k}
= 
\{ v \in \HGamma \mid \forall T \in \cT_k : v|_{T} \in \mathbb{P}^{p_k}(T) \}
\;,
\end{equation}
with $\mathbb{P}^{p_k}(T)$ signifying the space of all polynomials of
total degree at most $p_k$ over $T$, $T\in\cT_k$, $k\geq1$, 
whereby we choose $p_k \simeq c k$, for some fixed sufficiently large constant $c>0$.
Then it follows that there exists a constant $C>0$ such that 
$$ 
d_N = \dim(\Spk{\cT_k}) \leq C k^3 \simeq p^3_k,
$$
for all $N$.
Based on the above construction~\eqref{eq:WNhpFEM} of the $hp$-FE spaces~$\W$, 
the corresponding Galerkin discretizations of~\eqref{eq:sl}, cf.~\eqref{eq:weakGalerkin}, 
exhibit the exponential approximability \eqref{eq:exp} with $\gamma = \nicefrac13$.
\begin{theorem}[Exponential convergence of $hp$-Galerkin approximations]
\label{thm:ExpAppr}
For $\mat\beta\in[0,1)^m$, consider the semilinear boundary value problem \eqref{eq:sl} with data $f\in \B^0_{\mat\beta}(\Omega)$. 
Then, for the Galerkin projection of the weak solution $u\in \HGamma$ of \eqref{eq:sl} 
in the $hp$-FE spaces $\W$ of dimension $d_N=\O(N)$ from~\eqref{eq:WNhpFEM},
there exists constants $b,C>0$ (generally depending on $u$) 
such that the following exponential approximability bound holds
\begin{equation}\label{eq:ExpAppr}
\inf_{w_N \in \W} \NN{(u - w_N)} _{\Ltwo}
\leq 
C\exp(-b N^{\nicefrac13}) 
\;.
\end{equation}
In particular the exact Galerkin solutions $U \in \W$ 
in \eqref{eq:weakGalerkin},
whose existence and uniqueness was ensured in Proposition~\ref{prop:weakW},
satisfy the exponential convergence bound \eqref{eq:exp} with $\gamma =\nicefrac13$.
\end{theorem}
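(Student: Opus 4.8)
The plan is to reduce \eqref{eq:ExpAppr} to the analytic regularity of Proposition~\ref{prop:reg} combined with the classical $hp$-approximation theory for weighted analytic classes on geometric corner meshes, as developed in \cite{FS20_2675,ChSphp98}. The point is that \eqref{eq:ExpAppr} is nothing but the best-approximation statement \eqref{eq:exp} with $\gamma=\nicefrac13$ and $d_N=\O(N)$; hence, once $u$ is shown to lie in an appropriate weighted analytic class compatible with the mesh design in \S\ref{sec:hpGeoCorMesh}, the estimate follows from an off-the-shelf elementwise approximation result, and the only genuine work on our side is to match hypotheses and to convert the resulting rate into the number of degrees of freedom.

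First I would pass from $\mat\beta$ to the shifted weight vector $\mat\beta'$ defined in \eqref{eq:wbeta}. A short computation shows $\beta'_i\in[\nicefrac12,1)$ and $\beta'_i\ge\beta_i$ for every $i$; since enlarging the corner-weight exponents only relaxes the control of derivatives near the corners, the class is enlarged, so that $f\in\B^0_{\mat\beta}(\Omega)\subseteq\B^0_{\mat\beta'}(\Omega)$. The half-shift toward $1$ in \eqref{eq:wbeta} is designed precisely so that $\mat\beta'$ satisfies the admissibility conditions \eqref{eq:DN}, both at corners joining two edges of the same boundary type and at corners where the type of boundary condition changes. With \eqref{eq:DN} verified for $\mat\beta'$, Proposition~\ref{prop:reg} applies and yields $u\in\B^2_{\mat\beta'}(\Omega)$.

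The technical heart of the argument is then the $hp$-approximation estimate for functions in $\B^2_{\mat\beta'}(\Omega)$. On the geometric corner meshes $\cT_k$ obeying \eqref{eq:Geo}, with elementwise total degrees $p_k\simeq ck$ and $c$ sufficiently large, the spaces $\W=\Spk{\cT_k}$ from \eqref{eq:WNhpFEM} approximate the solution $u\in\B^2_{\mat\beta'}(\Omega)$ at a rate exponential in the number $k$ of geometric layers, i.e. $\inf_{w\in\W}\NN{(u-w)}_{\Ltwo}\le C\exp(-bk)$; this is exactly the content of \cite{FS20_2675,ChSphp98}, whose hypotheses (shape-regular $\sigma$-geometric refinement toward the corners, linearly increasing polynomial degrees, and weight vector $\mat\beta'\in(0,1)^m$) are met by our construction. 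I would therefore not reprove this estimate, but only check that our mesh sequence and degree distribution fall under the cited assumptions. Combining this with the dimension count $d_N=\dim(\W)\le Ck^3\simeq N$, so that $k\simeq N^{\nicefrac13}$, converts $\exp(-bk)$ into the asserted $\exp(-bN^{\nicefrac13})$ and establishes \eqref{eq:ExpAppr}. The ``in particular'' clause is then immediate: \eqref{eq:ExpAppr} is precisely \eqref{eq:exp} with $\gamma=\nicefrac13$, and since the Galerkin solution $\U\in\W$ exists and is unique by Proposition~\ref{prop:weakW}, the quasi-optimality bound of Proposition~\ref{prop:qo} transfers the exponential rate from the best approximation to the Galerkin error $\NN{(u-\U)}_{\Ltwo}$.

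The main obstacle I anticipate is the $hp$-approximation estimate invoked in the previous paragraph. Although it is available in the literature, its proof is genuinely delicate: one splits $\Omega$ into corner neighbourhoods and an interior region, uses tensorized, geometrically graded polynomial approximation with linearly growing degree on the corner patches to resolve the weighted analytic singularity, and elementwise analytic (Gevrey-type) approximation on the shape-regular interior elements, all while preserving global $\H^1$-conformity across element interfaces. Balancing the $\sigma$-geometric refinement against the linear degree growth so as to obtain the root-exponential rate in the total number of degrees of freedom is the most intricate part, which is exactly why I would rely on the cited results rather than reconstruct them here.
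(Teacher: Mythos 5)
Your proposal is correct and follows essentially the same route as the paper's proof: analytic regularity of $u$ in a corner-weighted class $\B^2$ via Proposition~\ref{prop:reg}, the cited $hp$-approximation results of \cite{FS20_2675,ChSphp98} on $\sigma$-geometric corner meshes with $p_k\simeq ck$, and the dimension count $d_N\leq Ck^3$ to convert the rate $\exp(-bk)$ into $\exp(-bN^{\nicefrac13})$. If anything, you are more careful than the paper's two-line proof, since you make explicit the weight shift $\mat\beta\mapsto\mat\beta'$ from~\eqref{eq:wbeta} needed so that the admissibility conditions~\eqref{eq:DN} hold before Proposition~\ref{prop:reg} can be invoked---a step the paper carries out only in the proof of Theorem~\ref{thm:ILGConv} and glosses over here.
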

\begin{proof}
Based on the assumed analytic regularity $f\in \B^0_{\mat\beta}(\Omega)$ 
on the data, by Proposition~\ref{prop:reg} and \cite{HS22_1031},
 the unique weak solution $u\in \HGamma$ of \eqref{eq:sl}
belongs to the corner-weighted analytic class $\B^2_{\mat\beta}(\Omega)$.
The exponential approximability \eqref{eq:ExpAppr} is then a direct consequence
of well-known approximation properties of the $hp$-FE spaces $\W$ introduced above, see
e.g. \cite{FS20_2675,ChSphp98} and the references cited therein.
\end{proof}
\begin{remark}
For subspaces $\W\subset \HGamma$ 
spanned by continuous, piecewise polynomial functions 
on regular, simplicial partitions $\cT_k$, $k\geq 1$, 
of the polygon $\Omega$,
\emph{under the present assumption of the polynomial nonlinearity in the reaction term in~\eqref{eq:sl}, we emphasize that 
the stiffness matrix corresponding to the Galerkin discretization of the bilinear form~$a(\cdot,\cdot)$ from~\eqref{eq:a},
      and the discrete representation of 
the nonlinear form~$b(\cdot;\cdot)$ from~\eqref{eq:b} can be evaluated exactly}, e.g., by 
Gaussian quadrature rules of sufficiently high order 
applied on each element $T\in \cT_k$, $k\geq 1$.
\end{remark}
\subsection{Solution of the nonlinear algebraic system}
\label{sec:NonlinSol}
On any given finite-dimensional subspace $\W\subset\HGamma$, 
we emphasize that the solution of the \emph{linear} algebraic system 
resulting from the ILG scheme~\eqref{eq:weakWit} features the \emph{same} stiffness matrix (associated to the Laplace operator) in each iteration step. 
On a related note, for $n\ge 0$, exploiting Remark~\ref{rem:bbound}, 
we can introduce a unique (Riesz representative)~$\res_n\in \W$ by
\begin{equation}\label{eq:ItAlg}
a(\res_n,v)= \int_\Omega fv\,\dx -\lambda b(\U_n;v) \qquad\forall v\in\W,
\end{equation}
which allows us to write the iterative scheme~\eqref{eq:weakWit} in the strong form
\begin{equation}\label{eq:weakit}
\U_{n+1}=(1-\alpha)\U_n + \alpha\res_n,\qquad n\ge 0.
\end{equation}

We will now estimate the operation count incurred in the implementation of iteration
\eqref{eq:weakit}. 
We begin our discussion with the observation that, on a given Galerkin space~$\W$, 
the Cholesky decomposition of the stiffness matrix corresponding to the bilinear form $a(\cdot,\cdot):\, \W\times \W\to \R$ from~\eqref{eq:a} 
can be performed once and for all, so that the computational work is limited to one backsolve only in each step of \eqref{eq:weakit}.
Without any further assumption, the Cholesky
decomposition of the matrix corresponding to 
the bilinear form $a(\cdot,\cdot)$ incurs work of $\O(d_N^3)$,  
where $d_N=\dim(\W)=\O(N)$.
From Theorem~\ref{thm:ILGConv},
the number $n$ of iterations of \eqref{eq:weakit} required to reach the approximation error bound
\eqref{eq:exp2} is given by~$\O(d_N^\gamma)$.
\subsubsection{Evaluation cost of the nonlinear term}
\label{sec:EvalNonlin}

In the sequel, for ease of notation, for meshes $\cT_k$ with an associated polynomial degree~$p_k$, cf.~\S\ref{sec:hpGeoCorMesh}, we will simply write $\cT_p$ and $p$, instead, respectively.
Recalling that $\Omega\subset \R^2$ is a polygon, 
and that $\W$ in~\eqref{eq:WNhpFEM} 
is a space of continuous, piecewise polynomial functions
on a regular triangulation $\cT_p$ of $\Omega$  with a corresponding
(uniform throughout $\cT_p$) total degree $p\geq 1$, 
we require $\O(p^2)$ many Gauss points in each triangle $T\in \cT_p$ for an \emph{exact integration} of the (polynomial) nonlinear form $\int_T w^{2q+1}v\,\dx$, for $w,v\in\mathbb{P}^p(T)$; here, our focus is on high-order $hp$-approximations where $p\gg q$, and thus 
\[
\deg(w^{2q+1}v)\le 2p(q+1)=\O(p).
\]
Therefore, in total, the evaluation of the nonlinear form
$$
b(w;v) = \sum_{T\in \cT_p} b_T(w;v) \;:= \sum_{T\in \cT_p} \int_T w^{2q+1}v \,\dx \;,\quad v,w\in \W, 
$$
occurring on the right-hand side of~\eqref{eq:ItAlg}, 
amounts to a work of $\O(\#({\cT_p}) p^2)$. 
In addition, if the right-hand side source function $f$ in~\eqref{eq:sl} 
is a polynomial (which we henceforth assume) then we note that the evaluation of the linear form $\int_\Omega fv\,\dx$ 
amounts to the same cost.
%
\subsubsection{Error vs. work for dense Cholesky decomposition}
Assuming that we have at hand the Cholesky factors of the global stiffness matrix 
associated to the bilinear form~$a(\cdot,\cdot)$, 
which requires $\O(N^3)$ operations \emph{once only}, 
each iteration step in~\eqref{eq:ItAlg} necessitates 
only one backsolve of $\O(N^2)$ work as well as one evaluation of the nonlinearity.
Then, 
in view of our considerations in~\S\ref{sec:EvalNonlin}, 
the total work for the approximate solution of the nonlinear algebraic 
system using $n$ iteration steps amounts to a computational work of
\[
\O(N^3) + n (\O(N^2) + \O(\#(\cT_p) p^2)).
\]
In case of $hp$-FE approximations with 
$\#(\cT_p) = \O(p)$ 
many triangles and 
$d_N = \O(N) = \O(p^3)$, $\gamma=\nicefrac13$, 
cf.~Theorem~\ref{thm:ExpAppr}, 
this transforms to $\O(p^9)+ n(\O(p^6) + \O(p^3))$. 
Therefore, in light of Theorem~\ref{thm:ILGConv}, 
applying $n=\O(d^\gamma_N)=\O(N^{\nicefrac13})=\O(p)$ iterations in~\eqref{eq:ItAlg} 
to obtain consistency with the Galerkin discretization error, 
a total work of
\begin{equation}\label{eq:Op}
\O(p^9)+ p(\O(p^6) + \O(p^3)) = \O(p^9) + \O(p^7)\;
\end{equation}
many operations are required. 
In particular, 
if we do not exploit any sparsity structure in the $hp$-FE matrices
of the linear principal part of~\eqref{eq:sl}, 
the total work for the computation of the approximate
Galerkin solution to the accuracy of the $hp$-FE discretization error (with exact
solution of the Galerkin equations) is still dominated by the work for the 
linear solve $\O(N^3)$.
\subsubsection{Static condensation Cholesky decomposition} 
\label{sec:static_condensation}
We discuss how to reduce the above operation count 
by exploiting a suitable separation 
of the polynomial basis functions on each
triangle $T\in \cT_p$, $p\geq 1$, 
into \emph{$3$ nodal, $\O(p)$ face and $\O(p^2)$ internal modes}; 
see \cite{ChSphp98,SzIMBPFEM,SBJSch06,FKDN2015} for details.
Using static condensation on each of the $\O(p)$ elements in the 
geometric mesh $\cT_p$, $p\geq 1$, and noting that 
\[
\nicefrac{\#\text{dof}}{\#\text{elements}} 
=\nicefrac{\O(p^3)}{\O(p)}
= \O(p^2), 
\]
costs $\O(p^6)$ flops per element $T\in\cT_p$, $p\geq 1$.
Hence, multiplying by the number of elements, results in the following result.
\begin{proposition}\label{prop:WorkChol}
The work for a global Cholesky decomposition of the stiffness matrix 
(corresponding to the bilinear form $a(\cdot,\cdot)$ from~\eqref{eq:a}) 
on the $hp$-FE space $\Sp{\cT_p}$, $p\ge1$, 
with prior elementwise static condensation 
requires asymptotically, as $p\to\infty$, $\O(p^7)$ flops.
\end{proposition}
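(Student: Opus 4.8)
The plan is to exploit the elementwise support structure of the internal (bubble) modes, which decouples the interior degrees of freedom across elements, so that the global Cholesky factorization reduces to a sequence of independent small elementwise eliminations followed by the factorization of a much smaller condensed interface system. First I would make the mode decomposition precise: on each element $T \in \cT_p$ the local polynomial space $\mathbb{P}^p(T)$, of dimension $\O(p^2)$, is split into $3$ nodal modes, $\O(p)$ edge (face) modes supported on $\partial T$, and $\O(p^2)$ internal modes that vanish identically on $\partial T$. The crucial observation is that, since the internal modes of distinct elements have disjoint support and vanish on inter-element boundaries, the interior--interior block $A_{ii}$ of the global stiffness matrix of $a(\cdot,\cdot)$ is block diagonal with respect to $\cT_p$; hence the interior unknowns may be eliminated one element at a time.

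Next I would count the cost of the elementwise elimination. Partitioning the local stiffness matrix into boundary and interior blocks,
\[
A_T = \begin{pmatrix} A_{bb} & A_{bi} \\ A_{ib} & A_{ii} \end{pmatrix},
\]
the interior block $A_{ii}$ has size $\O(p^2) \times \O(p^2)$, so a dense Cholesky factorization of $A_{ii}$ costs $\O((p^2)^3) = \O(p^6)$ flops. Forming the local Schur complement $A_{bb} - A_{bi} A_{ii}^{-1} A_{ib}$ then requires only backsolves against the already-computed factor and matrix products with the $\O(p) \times \O(p^2)$ coupling block $A_{bi}$; these contribute lower-order terms of size $\O(p^5)$ and $\O(p^4)$, respectively. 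Thus the per-element work is dominated by the $\O(p^6)$ factorization of $A_{ii}$, and summing over the $\O(p)$ elements of $\cT_p$ yields a total static-condensation cost of $\O(p) \cdot \O(p^6) = \O(p^7)$.

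Finally I would bound the cost of factorizing the surviving condensed system on the interface degrees of freedom, namely the nodal and edge modes. Since the mesh has $\O(p)$ edges, each carrying $\O(p)$ edge modes, together with $\O(p)$ nodal modes, the condensed system has dimension $\O(p^2)$, so its dense Cholesky factorization costs $\O((p^2)^3) = \O(p^6)$, which is subdominant. Adding the two contributions gives the asserted total of $\O(p^7)$ flops. I do not expect a serious analytic obstacle here, as the argument is essentially a bookkeeping exercise; the one structural fact that must be invoked carefully is the block-diagonality of $A_{ii}$ (i.e.\ that the chosen internal modes genuinely vanish on $\partial T$, so that no interior--interior coupling survives across element interfaces), and the one arithmetic point to verify is that the $\O(p^6)$ factorization of $A_{ii}$ truly dominates the Schur-complement formation on each element.
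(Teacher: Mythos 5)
Your proposal is correct and follows essentially the same route as the paper: the paper likewise splits the local basis into $3$ nodal, $\O(p)$ edge, and $\O(p^2)$ internal modes, charges $\O(p^6)$ flops per element for condensing the $\O(p^2)$ interior unknowns on each of the $\O(p)$ elements (giving the dominant $\O(p^7)$ term), and then factorizes the condensed interface system of size $\O(p^2)$ at subdominant cost $\O((p^2)^3)=\O(p^6)$. Your additional bookkeeping---the block-diagonality of the interior--interior block and the explicit $\O(p^5)$/$\O(p^4)$ bounds for the Schur-complement formation---merely makes explicit what the paper leaves implicit.
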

\begin{proof}
We apply elementwise static condensation which yields only $\O(p)$ ``external'' unknowns on $\O(p)$ 
inter-element edges in $\cT_p$. Consequently, upon condensation, a
global dense linear system of size $\O(p^2)$ is obtained. 
The work of a Cholesky decomposition 
for this condensed system scales as $\O((p^2)^3) = \O(p^6)$ flops. 
This, in turn, implies that work for a nested dissection version of the initial Cholesky decomposition of 
the bilinear form $a(\cdot,\cdot): \W\times \W\to \R$ 
can be reduced from $\O(d_N^3) = \O(p^9)$ to $\O(p^7)$ flops.
\end{proof}

\begin{remark}
In particular, the above Proposition~\ref{prop:WorkChol} shows that the work for the initial Cholesky decomposition of the  condensed stiffness matrix of the linear part 
$a(\cdot,\cdot):\W\times \W\to \R$ is asymptotically (as $d_N \to \infty$) of the same order 
as the work required to run the iteration~\eqref{eq:ItAlg} on 
\emph{each} Galerkin space $\W=\Sp{\cT_p}$ in $n(\O(N^2) + \O(\#(\cT_p) p^2))=\O(p^7)$ operations, cf.~\eqref{eq:Op},
to termination at ``$hp$ approximation accuracy''~\eqref{eq:ExpAppr}.
\end{remark}

\begin{remark} [Static condensation Cholesky decomposition exploiting affine equivalence] 
\label{rmk:StatCholAffin}
The argument in Proposition~\ref{prop:WorkChol} is not sharp in the sense that it 
does not exploit the constant-coefficient structure of the linear principal part of the operator
in \eqref{eq:sl}. Indeed, in this case, in the element stiffness matrix
generation there is no necessity to employ numerical integration. 
In contrast,
for a linear principal part of the form $-\Div (\mat A(\mat x) \nabla(\cdot))$ with 
\emph{non-constant coefficients}, i.e., for a diffusion matrix $\mat A:\Omega \to \R^{2\times 2}_{\mathrm{sym}}$ with variable coefficient functions $a_{ij}$ that are analytic in $\overline\Omega$, numerical quadrature in the 
element stiffness matrices is necessary. 
The work needed to build the global stiffness matrix then scales by a constant factor 
compared to the work required to compute the stiffness matrix in the constant coefficient case. 
The use of quadrature will cause 
a further consistency error that is, however, of the same order as the discretization error bounds. 
We therefore expect the present error vs. work bound to hold also in this more general setting.

In addition, assuming that $\cT_p$, $p\geq 1$, 
is a regular partition of $\Omega$ into affine-equivalent
triangles, all element stiffness matrices are affine equivalent to the reference
element stiffness matrix. 
Therefore, element stiffness matrix Cholesky factorization
and Schur complement formation could be done once and for all on the reference element.
This would reduce the cost bound of nested dissection Cholesky  factorization of 
the stiffness matrix for the linear principal part to $\O(p^6)$. 
We refer to \cite{SBJSch06} for further exploitation of reference element stiffness and
mass matrix sparsity afforded by the choice of a particular set of shape functions.
As the total work for the evaluation of the nonlinearity during the ILG iteration
is already $\O(p^7)$, we do not account for this lower complexity in the present article.
\end{remark}
%
\subsection{Exponential convergence of fully discrete iterative linearized $hp$-FE approximation}
We will now investigate the complexity of the fully discrete iteration~\eqref{eq:weakWit}. 
To this end, in order to deal with potential quadrature errors of integrals 
involving the right-hand side function~$f$ in~\eqref{eq:sla}, 
we make the following assumption:

\begin{apt}\label{Q}
For a sequence $\{ \cT_p\}_{p\geq 1}$ of regular partitions of $\Omega$ into triangles, with geometric corner refinement and associated polynomial degrees $p=1,2,\ldots$, cf.~\S\ref{sec:hpGeoCorMesh}--\ref{sec:NonlinSol}, we suppose that the right-hand side function $f\in\Ltwo\cap\B^0_{\mat\beta}(\Omega)$ in~\eqref{eq:sla} can be approximated by a corresponding sequence of functions $\Pf$ with $\Pf|_{T}\in\mathbb{P}^{\tp}(T)$ for all $T\in\cT_p$, $p=1,2,\ldots$, where $\tp=\O(p)$, such that
\begin{equation}\label{eq:fexp}
\|f-\Pf\|_{\Ltwo}\le C\exp(-cN^{\nicefrac13}),
\end{equation}
with $N=N(p)=\dim(\Sp{\cT_{p}}) = \O(p^3)$, as in~\eqref{eq:ExpAppr}, and constants $c,C>0$.
\end{apt}

\begin{remark}
This assumption is satisfied, evidently, if $f\in\Ltwo\cap\B^0_{\mat\beta}(\Omega)$ 
is 
a polynomial function in $\Omega$, and,
in particular,
for all functions that are real-analytic in~$\overline{\Omega}$.
More general conditions and fully discrete approximation methods, 
which are based on point-queries of $\mat A(\mat x)$, 
cf.~Remark~\ref{rmk:StatCholAffin}, and $f(\mat x)$ in $\Omega$, 
will be studied in a forthcoming article.
\end{remark}

Instead of the discrete iteration~\eqref{eq:weakWit}, 
which relies on exact integration of $f$, 
we now consider the \emph{fully discrete $hp$-ILG scheme} 
given by
\begin{equation}
\label{eq:weakWit'}
a(\tU_{n+1},v)
=
(1-\alpha)a(\tU_n,v)
-\alpha \lambda b(\tU_n;v)
+\alpha \sum_{T\in\cT_p}Q_{T,\tp}(\Pf v)\qquad \forall v\in\W,
\end{equation}
where $Q_{T,\tp}$ is an elementwise quadrature rule on each triangle $T\in\cT_p$, 
which is based on $\O(p^2)$ (e.g. Gauss-type) quadrature points, 
and integrates any polynomial in~$\mathbb{P}^{\tp+p}(T)$ \emph{exactly}. 
Similar to our previous discussion in~\S\ref{sec:EvalNonlin}, 
we note that the work for the evaluation of the quadrature in~\eqref{eq:weakWit'} amounts to $\O(\#(\cT_p)p^2)$.

\begin{lemma}\label{lem:aux}
Let $\alpha$ satisfy~\eqref{eq:alpha}, and suppose that the iterations~\eqref{eq:weakWit} and~\eqref{eq:weakWit'} are initiated with the same starting guess~$U_0\in\WW_N$. Then, under Assumption~\ref{Q}, for the difference between the corresponding iterative solutions $U_{n}$ and $\tU_{n}$ the following bound holds
\[
\NN{(\U_{n}-\tU_{n})}_{\Ltwo}\le C\exp\left(-\kappa N^{\nicefrac13}\right),\qquad n\ge 0,
\]
with constants $C,\kappa>0$ independent of $N$.
\end{lemma}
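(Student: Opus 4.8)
The plan is to reduce the fully discrete scheme~\eqref{eq:weakWit'} to an \emph{exact} Picard iteration for a perturbed right-hand side, and then to track the propagation of this perturbation through the iteration. First I would observe that, since $Q_{T,\tp}$ integrates every polynomial in $\mathbb{P}^{\tp+p}(T)$ exactly and since $\Pf|_T\,v|_T\in\mathbb{P}^{\tp+p}(T)$ for $v\in\W$, the quadrature term is in fact exact, i.e.
\[
\sum_{T\in\cT_p}Q_{T,\tp}(\Pf v)=\int_\Omega \Pf v\,\dx\qquad\forall v\in\W .
\]
Hence \eqref{eq:weakWit'} coincides with iteration~\eqref{eq:weakWit} in which $f$ is replaced by $\Pf$. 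Subtracting the two schemes and writing $e_n:=\U_n-\tU_n\in\W$, I obtain for all $v\in\W$
\[
a(e_{n+1},v)=(1-\alpha)a(e_n,v)-\alpha\lambda\bigl(b(\U_n;v)-b(\tU_n;v)\bigr)+\alpha\int_\Omega(f-\Pf)v\,\dx .
\]
Introducing the (Riesz representer) $\zeta_n,g_n\in\W$ defined by $a(\zeta_n,v)=b(\U_n;v)-b(\tU_n;v)$ and $a(g_n,v)=\int_\Omega(f-\Pf)v\,\dx$ for all $v\in\W$, this becomes the identity $e_{n+1}=(1-\alpha)e_n-\alpha\lambda\zeta_n+\alpha g_n$. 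Testing the definition of $g_n$ with $v=g_n$ and invoking the Sobolev embedding~\eqref{eq:Sobolev} with $q=0$ together with Assumption~\ref{Q} yields
\[
\NN{g_n}_{\Ltwo}\le C_0(\Omega)\|f-\Pf\|_{\Ltwo}=:\tau_N\le C_0(\Omega)\,C\exp(-cN^{\nicefrac13}) .
\]

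Next I would reproduce the contraction estimate from the proof of Proposition~\ref{prop:itconv}, now applied to $e_n$ in place of $\U-\U_n$. Setting $w_n:=(1-\alpha)e_n-\alpha\lambda\zeta_n$, the monotonicity~\eqref{eq:bmonotone} gives $a(\zeta_n,e_n)=b(\U_n;\U_n-\tU_n)-b(\tU_n;\U_n-\tU_n)\ge0$, while testing the local Lipschitz bound~\eqref{eq:bLip} with $v=\zeta_n$ gives $\NN{\zeta_n}_{\Ltwo}\le (2q+1)C_q(\Omega)^{2(q+1)}\bigl(\NN{\U_n}_{\Ltwo}+\NN{e_n}_{\Ltwo}\bigr)^{2q}\NN{e_n}_{\Ltwo}$. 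Exactly as in~\eqref{eq:q}, these two facts yield $\NN{w_n}_{\Ltwo}\le \tilde r\,\NN{e_n}_{\Ltwo}$ with $\tilde r=\sqrt{(1-\alpha)^2+\alpha^2\tilde L^2}$, where $\tilde L:=\lambda(2q+1)C_q(\Omega)^{2(q+1)}(\NN{\U_n}_{\Ltwo}+\NN{e_n}_{\Ltwo})^{2q}$ is the local Lipschitz constant. The triangle inequality then delivers the perturbed contraction
\[
\NN{e_{n+1}}_{\Ltwo}\le \tilde r\,\NN{e_n}_{\Ltwo}+\alpha\tau_N .
\]
Since the two schemes start from the \emph{same} initial guess, $e_0=0$, and provided $\tilde r<1$, summing the resulting geometric series gives the bound $\NN{e_n}_{\Ltwo}\le \alpha\tau_N/(1-\tilde r)$ uniformly in $n$, which is of the claimed exponential size by the estimate for $\tau_N$.

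The main obstacle is to guarantee that the effective rate $\tilde r$ is \emph{strictly below one}, since the local constant $\tilde L$ contains the very quantity $\NN{e_n}_{\Ltwo}$ that one seeks to bound. To close this I would first record uniform bounds $\NN{\U_n}_{\Ltwo}\le\rho+R$ (from Proposition~\ref{prop:itconv}, with $R=\NN{(\U-\U_0)}_{\Ltwo}$) and, recognizing~\eqref{eq:weakWit'} as the exact Picard iteration for the well-posed monotone Galerkin problem with data $\Pf$, the analogous bound $\NN{\tU_n}_{\Ltwo}\le 2\tilde\rho+\NN{\U_0}_{\Ltwo}$ via Propositions~\ref{prop:weakW} and~\ref{prop:itconv}; here $\tilde\rho:=C_q(\Omega)\|\Pf\|_{\L{\nicefrac{2(q+1)}{(2q+1)}}}\to\rho$ because $\Pf\to f$ in $\Ltwo\hookrightarrow\L{\nicefrac{2(q+1)}{(2q+1)}}$ on the bounded domain. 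These control $\NN{\U_n}_{\Ltwo}$ independently of $n$ and $N$, so that at $\NN{e_n}_{\Ltwo}=0$ one has $\tilde L\le L$ with $L$ as in~\eqref{eq:L}. A short bootstrap then handles the smallness of $\NN{e_n}_{\Ltwo}$: assuming $\NN{e_k}_{\Ltwo}\le\delta$ for $k\le n$ renders $\tilde L$ as close to $L$ as desired once $\delta$ (equivalently $\tau_N$, equivalently the threshold on $N$) is small enough, so that by continuity the strict inequality $\alpha<\nicefrac{2}{(L^2+1)}$ of~\eqref{eq:alpha} persists as $\alpha<\nicefrac{2}{(\tilde L^2+1)}$, i.e. $\tilde r<1$; the induction closes with $\delta=\alpha\tau_N/(1-\tilde r)$. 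The finitely many remaining small values of $N$ are absorbed into the constant $C$.
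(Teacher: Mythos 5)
Your proposal is correct, but it takes a genuinely different route from the paper's proof. The paper does not subtract the two iterations; instead it introduces the auxiliary \emph{exact} Galerkin solution $\widetilde U\in\W$ of the perturbed problem with datum $\Pf$, notes (as you do) that the quadrature in~\eqref{eq:weakWit'} is exact on $\Pf v$, so that~\eqref{eq:weakWit'} is precisely the unperturbed Picard iteration for this auxiliary problem, and then reuses Proposition~\ref{prop:itconv} as a black box to obtain $\NN{(\widetilde U-\tU_{n+1})}_{\Ltwo}\le r_\alpha^{n+1}\NN{(\widetilde U-\U_0)}_{\Ltwo}$; the consistency error is handled separately by testing the difference of the two Galerkin problems with $v=U-\widetilde U$ and invoking the monotonicity~\eqref{eq:monotonicity}, which gives the clean perturbation bound $\NN{(U-\widetilde U)}_{\Ltwo}\le C_P\|f-\Pf\|_{\Ltwo}$, after which a triangle inequality, \eqref{eq:fexp}, and the choice of $n$ as in \eqref{eq:aux1}--\eqref{eq:aux2} conclude. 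What each approach buys: the paper's argument is shorter, but its final estimate retains a term proportional to $r_\alpha^{n}$, so taken literally it yields the claimed $N$-exponential, $n$-independent bound only once $n\ge c\,N^{\nicefrac13}$ --- which suffices for Theorem~\ref{thm:MainRes}, where $n(p)=\O(p)$ steps are taken; your direct perturbed-contraction recursion $\NN{e_{n+1}}_{\Ltwo}\le \tilde r\,\NN{e_n}_{\Ltwo}+\alpha\tau_N$ with $e_0=0$ sums to the uniform-in-$n$ bound $\alpha\tau_N/(1-\tilde r)$ and thus proves the lemma exactly as stated, at the price of redoing the contraction estimate with the perturbation and running a bootstrap to keep $\tilde r<1$. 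That bootstrap is sound: your uniform bound $\NN{\U_n}_{\Ltwo}\le\rho+R\le 2\rho+\NN{\U_0}_{\Ltwo}$ shows $\tilde L\le L$ at $\NN{e_n}_{\Ltwo}=0$, and the strictness of~\eqref{eq:alpha} leaves the room needed to close the induction with $\delta=\alpha\tau_N/(1-\tilde r)$ once $N$ exceeds a fixed threshold. One caveat, which you share with the paper: absorbing the finitely many small $N$ into the constant $C$ tacitly requires $\sup_{n}\NN{e_n}_{\Ltwo}<\infty$ for each such fixed $N$, which is not automatic if the perturbed iteration fails to contract for the given $\alpha$; but the paper's assertion that Proposition~\ref{prop:itconv} applies to the perturbed iteration ``with the \emph{same} contraction constant $r_\alpha$'' has the identical lacuna (strictly, $\rho$ must there be replaced by $C_q(\Omega)\|\Pf\|_{\L{\nicefrac{2(q+1)}{(2q+1)}}}$, which only tends to $\rho$ as $N\to\infty$), so this is not a gap relative to the paper's own argument.
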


\begin{proof}
We consider the auxiliary problem of finding $\widetilde U\in\WW_N$ such that
\[
a(\widetilde U,v)+\lambda b(\widetilde U;v)=\sum_{T\in\cT_p}Q_{T,\tp}(\Pf v)\qquad \forall v\in \W.
\]
By Assumption~\ref{Q}, we observe that
\[
a(\widetilde U,v)+\lambda b(\widetilde U;v)=\int_\Omega \Pf v\,\dx\qquad \forall v\in \W.
\]
This shows that Proposition~\ref{prop:itconv} can be applied to the perturbed iteration~\eqref{eq:weakWit'}, with the \emph{same} contraction constant $r_\alpha$ from~\eqref{eq:q}. In particular, we have the bound
\[
\NN{(\widetilde U-\tU_{n+1})}_{\Ltwo}\le
r^{n+1}_\alpha\|\nabla(\widetilde U-\U_0)\|_{\Ltwo}\qquad\forall n\ge 0.
\]
In addition, 
for the difference of $\widetilde U$ and the solution~$U$ of~\eqref{eq:weakGalerkin} it holds
\[
a(U-\widetilde U,v)+\lambda \left(b(U;v)-b(\widetilde U;v)\right)=\int_\Omega (f-\Pf) v\,\dx\qquad \forall v\in \W.
\]
Testing with $v=U-\widetilde U\in\WW_N$, and applying~\eqref{eq:monotonicity}, leads to
\[
\NN{(U-\widetilde U)}^2_{\Ltwo}\le \|f-\Pf\|_{\Ltwo}\|U-\widetilde U\|_{\Ltwo},
\]
which, by means of the Poincar\'e inequality, results in
\[
\NN{(U-\widetilde U)}_{\Ltwo}\le C_P\|f-\Pf\|_{\Ltwo},
\]
for a constant $C_P>0$ only depending on~$\Omega$. Hence, owing to the triangle inequality, we deduce that
\begin{align*}
\NN{(U-\tU_{n+1})}_{\Ltwo}
&\le\NN{(U-\widetilde U)}_{\Ltwo}+\NN{(\widetilde U-\tU_{n+1})}_{\Ltwo}\\
&\le C_P\|f-\Pf\|_{\Ltwo}+r^{n+1}_\alpha\|\nabla(\widetilde U-\U_0)\|_{\Ltwo}.
\end{align*}
The first term on the right-hand side of the above bound can be estimated with~\eqref{eq:fexp}, whilst the second term can be dealt with as in~\eqref{eq:aux1}--\eqref{eq:aux2} with $d_N=\O(N)$ and $\gamma=\nicefrac13$, see~\S\ref{sec:hpGeoCorMesh}.
\end{proof}

We are now prepared to state and prove our main result.
\begin{theorem}[{Polylogarithmic $\eps$-complexity}]
\label{thm:MainRes}
Suppose that the semilinear boundary value problem~\eqref{eq:sl} features a corner-weighted
analytic source term $f\in \Ltwo \cap \B^0_{\mat\beta}(\Omega)$ for which Assumption~\ref{Q} can be satisfied.
%
Consider the $hp$-ILG discretization based on a sequence $\{ \cT_{p}\}_{p\geq 1}$
of regular partitions of $\Omega$ into triangles, with geometric corner refinement,
cf.~\eqref{eq:Geo}, 
and on Galerkin projections onto the associated discrete 
spaces $\WW_{N(p)}=\Sp{\cT_{p}}$, $p\ge 1$, from \eqref{eq:WNhpFEM}, 
with $N = N(p) = \dim(\Sp{\cT_{p}}) = \O(p^3)$,
and with $n(p) = \O(p)$ steps of the linearized iteration~\eqref{eq:weakWit} on $\Wp$. 
Then, the following hold true:
\begin{enumerate}[\rm(a)]

\item For every $0<\eps\leq 1$, there exists a polynomial degree
$
p = \O(|\log(\eps)|)
$
such that upon applying
$n(p)$ steps of the fully discrete $hp$-ILG iteration procedure \eqref{eq:weakWit'} 
the approximate Galerkin solutions $\tU_{n(p)} \equiv \tU_{n} \in \Wp$ 
satisfy the error bound
$
\NN{(u - \tU_{n(p)})}_{\Ltwo} \leq \eps \;.
$

\item The total computational cost $\work(\eps) \in \N$, 
measured in terms of float point operations necessary to
compute the $hp$-ILG approximations
$\tU_{n(p)} \in  \Wp$ to accuracy $\eps>0$, 
is bounded by 
\begin{equation}\label{eq:EpsErr}
\work(\eps) \leq C(1+|\log(\eps)|)^{7} \;.
\end{equation}
\item In terms of the number $N(p)$ of unknowns, 
or 
in terms of $\work(\eps)$ required for the computation of the 
$hp$-ILG discretization of \eqref{eq:sl}, 
there are constants $b,C>0$ such that it holds 
$$
\NN{( u - \tU_{n(p)})}_{\Ltwo}
\leq 
C\exp(-b(N(p))^{\nicefrac13}) 
\leq 
C\exp(-b\work(\eps)^{\nicefrac17}) ,
$$
for $p=1,2,\ldots$
\end{enumerate}
\end{theorem}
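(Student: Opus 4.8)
The plan is to assemble the final result by stitching together the three error sources that have already been controlled in isolation: the $hp$-Galerkin discretization error, the Picard iteration error, and the quadrature/data-approximation error. The starting point is the triangle inequality
\begin{equation*}
\NN{(u-\tU_{n(p)})}_{\Ltwo}
\le
\NN{(u-\U)}_{\Ltwo}
+\NN{(\U-\U_{n(p)})}_{\Ltwo}
+\NN{(\U_{n(p)}-\tU_{n(p)})}_{\Ltwo},
\end{equation*}
where $\U\in\Wp$ is the exact Galerkin solution of~\eqref{eq:weakGalerkin} and $\U_{n(p)}$ is the iterate of the idealized scheme~\eqref{eq:weakWit}. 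The first term is bounded by combining Proposition~\ref{prop:qo} with Theorem~\ref{thm:ExpAppr}, giving $C\exp(-bN^{\nicefrac13})$; the second is bounded by Theorem~\ref{thm:ILGConv} (equivalently Proposition~\ref{prop:itconv} with $n=\O(d_N^\gamma)=\O(p)$ steps), which yields a bound of the same exponential form; the third is exactly the content of Lemma~\ref{lem:aux}. Each of the three contributions is therefore of the form $C\exp(-\kappa N^{\nicefrac13})$ with $N=\O(p^3)$, so with $\kappa$ the smallest of the three rates I would conclude part~(a): there is a constant $b>0$ with $\NN{(u-\tU_{n(p)})}_{\Ltwo}\le C\exp(-bp)$, and inverting this relation shows that choosing $p=\O(|\log(\eps)|)$ suffices to reach accuracy $\eps$.

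For part~(b), I would tally the floating-point operations at the chosen degree $p=\O(|\log(\eps)|)$. The one-time Cholesky factorization of the stiffness matrix for $a(\cdot,\cdot)$ costs $\O(p^7)$ flops by Proposition~\ref{prop:WorkChol} (using elementwise static condensation and nested dissection). Running $n(p)=\O(p)$ iterations of~\eqref{eq:weakWit'} then costs $n(p)\bigl(\O(N^2)+\O(\#(\cT_p)p^2)\bigr)=\O(p)\cdot\bigl(\O(p^6)+\O(p^3)\bigr)=\O(p^7)$, as recorded in~\eqref{eq:Op}, where the backsolve dominates and the exact evaluation of both the monomial nonlinearity $b(\cdot;\cdot)$ and the quadrature $Q_{T,\tp}(\Pf\,\cdot)$ each cost $\O(\#(\cT_p)p^2)$ per step (cf.~\S\ref{sec:EvalNonlin}). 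Adding the two contributions gives $\work(\eps)=\O(p^7)$, and substituting $p=\O(|\log(\eps)|)$ produces the bound~\eqref{eq:EpsErr}.

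Part~(c) is then a bookkeeping conversion between the three scales $N$, $\work$, and the error. From part~(a), the error is bounded by $C\exp(-b N^{\nicefrac13})$ since $N=\O(p^3)$ and $p\sim N^{\nicefrac13}$. From $\work(\eps)=\O(p^7)$ we have $p=\O(\work^{\nicefrac17})$, hence $N^{\nicefrac13}\sim p\gtrsim c\,\work^{\nicefrac17}$ for a suitable constant, which yields the second inequality $C\exp(-bN^{\nicefrac13})\le C\exp(-b\,\work(\eps)^{\nicefrac17})$ after adjusting $b$. The only subtlety I anticipate is being careful that the constants $C$ and the rate constant $b$ surviving the triangle inequality are genuinely $N$-independent: this requires that the contraction constant $r_\alpha$ in Proposition~\ref{prop:itconv} and the quasi-optimality constant in Proposition~\ref{prop:qo} do not degrade as $p\to\infty$. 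I would note that the $hp$-FE iterates remain in the stability ball $\NN{\cdot}_{\Ltwo}\le\rho$ (Proposition~\ref{prop:weakW}) uniformly in $N$, so the Lipschitz constant $L$ in~\eqref{eq:L} and thus $r_\alpha$ can be fixed once and for all, independent of the discretization; this uniformity, which the energy-type arguments of Section~\ref{sec:ILG} were specifically designed to provide, is what makes the number of iterations $n(p)=\O(p)$ rather than growing faster, and is the main point requiring care in the write-up.
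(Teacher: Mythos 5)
Your proposal is correct and follows essentially the same route as the paper: your three-term triangle inequality merely unfolds the paper's two-term splitting (the bound \eqref{eq:hpILGErr} of Theorem~\ref{thm:ILGConv}, which already contains quasi-optimality plus exponential approximability plus the Picard contraction, combined with Lemma~\ref{lem:aux}), and your flop tally for (b) --- $\O(p^7)$ for the statically condensed Cholesky factorization via Proposition~\ref{prop:WorkChol} plus $n(p)=\O(p)$ iterations at $\O(p^6)$ backsolve and $\O(p^3)$ nonlinearity/quadrature cost each, followed by the conversion $p\sim N^{\nicefrac13}\sim\work(\eps)^{\nicefrac17}$ for (c) --- matches the paper's argument exactly. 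Your closing remark that the contraction factor $r_\alpha$ and the quasi-optimality constant must be uniform in $N$, which follows from the stability bound of Proposition~\ref{prop:weakW} keeping all iterates in a fixed ball so that $L$ in~\eqref{eq:L} is fixed once and for all, is precisely the uniformity the paper's energy-type analysis in \S\ref{sec:ILG} supplies, so no gap remains.
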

\begin{proof}
In accordance with~\eqref{eq:hpILGErr} and Lemma~\ref{lem:aux}, the error $\NN{(u-\tU_{n(p)})}_{\Ltwo}$ for the fully discrete $hp$-ILG scheme~\eqref{eq:weakWit'} after employing $n(p)=\O(p)$ iteration steps is of order $C\exp(-bp)$ for suitable constants $b,C>0$. In particular, the $hp$-ILG iteration error is of the order of the $hp$-discretization error, cf.~\eqref{eq:ExpAppr}. Furthermore, coupling the parameter $p$ (i.e., the polynomial degree,
viz. the number of geometric mesh layers) to the prescribed target accuracy
$0<\eps\leq 1$ as in the statement of the theorem via $p\geq C|\log(\eps)|$
for a sufficiently large uniform constant $C>0$, shows~(a). The work estimate in (b) follows from 
Proposition~\ref{prop:WorkChol} 
and from the discussion in \S\ref{sec:EvalNonlin} on the complexity for 
one evaluation of the $hp$-discretization of the nonlinearity. 
Finally, (c) is a direct consequence of~(a) and~(b).
\end{proof}
\begin{remark}
We remark that the error vs. work bound \eqref{eq:EpsErr} is analogous in asymptotic
order to the (exponential) error vs. work bound for the $hp$-FE Galerkin 
solution of the corresponding linear problem in $\Omega$.
\end{remark}


%
\section{Numerical Experiments} \label{sec:numerics}
In this section we present a series of numerical experiments 
to computationally verify the theoretical error vs. work bounds derived in Theorem~\ref{thm:MainRes} 
for the $hp$-ILG discretization of the semilinear boundary value problem \eqref{eq:sl}. 
To this end, we consider the semilinear boundary value problem~\eqref{eq:sl}, 
for $\lambda=q=1$ and the constant function $f\equiv 1$, 
on two different domains:
\begin{itemize}
	\item {\bf Example 1:} We let $\Omega$ be the unit square $(0,1)^2$.
	\item {\bf Example 2:} Here, $\Omega$ is the ``L-shaped domain'',
        $(-1,1)^2\setminus [0,1)\times(-1,0]\subset {\mathbb R}^2$.
\end{itemize}
\begin{figure}[t!]
\begin{center}
\includegraphics[scale=0.5]{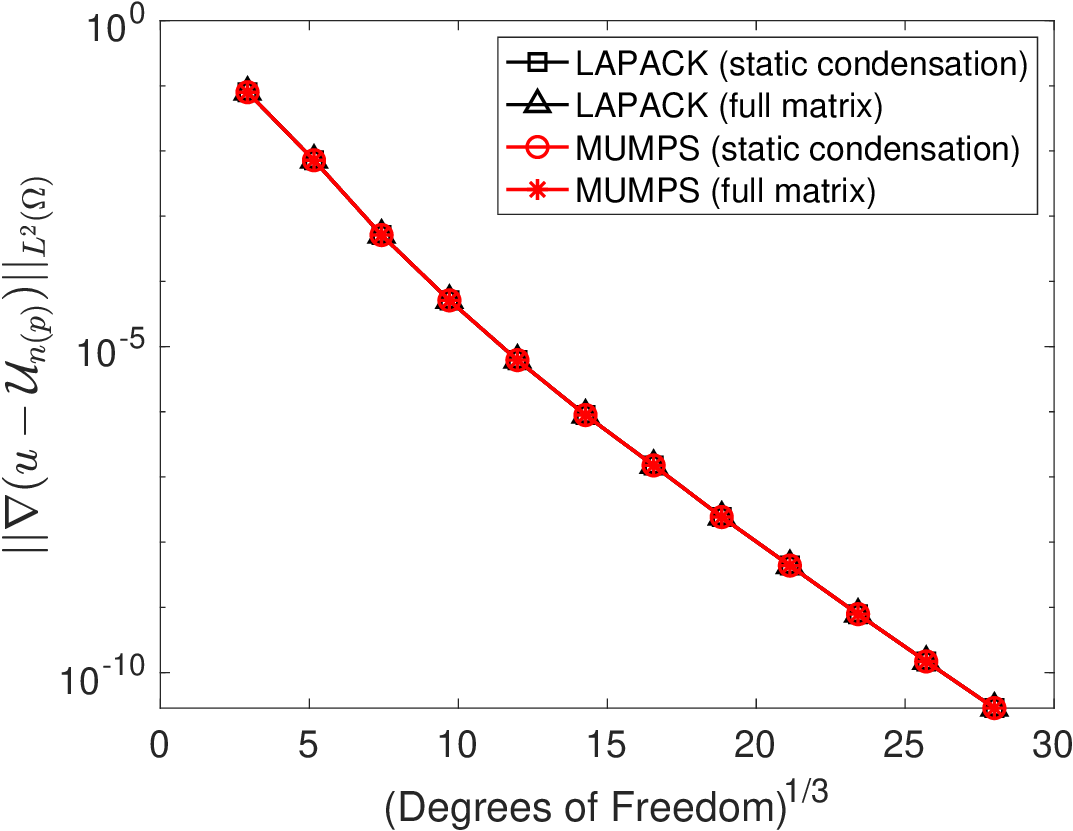} \\
(a) \\
~~\\
\includegraphics[scale=0.5]{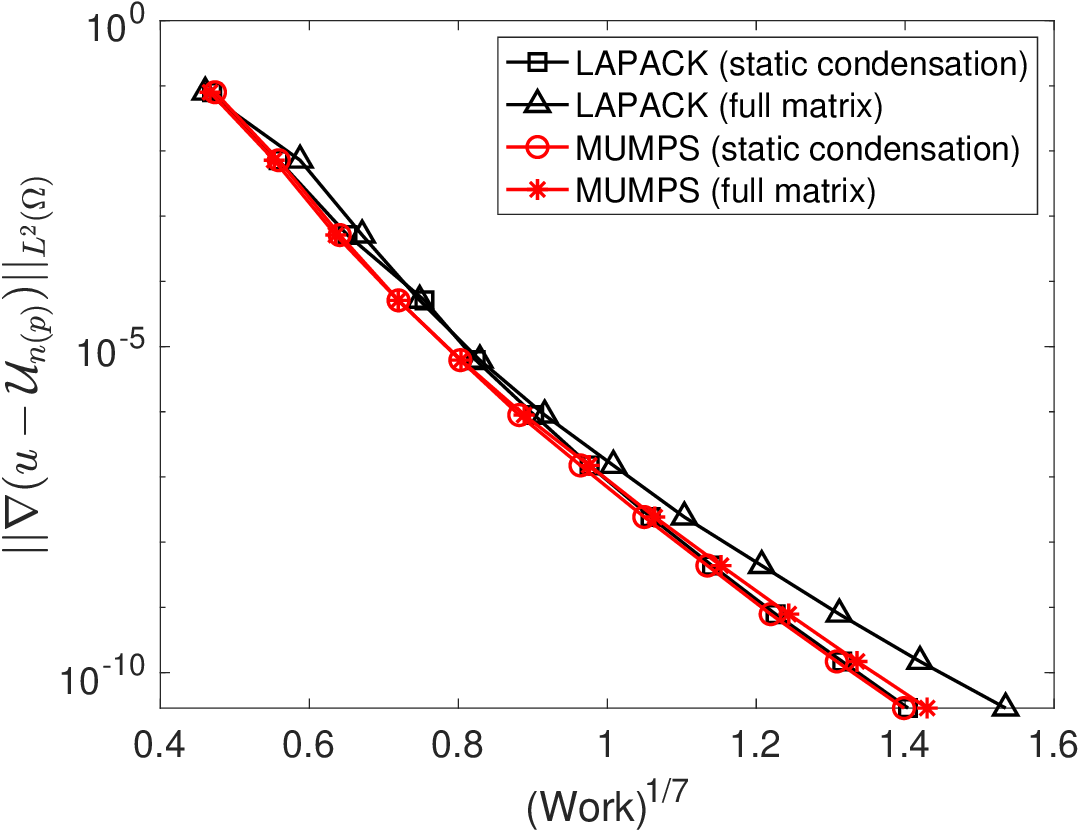} \\
(b)
\end{center}
\caption{Example 1 (square domain): (a) Comparison of the error with respect to the third root of the
number of degrees of freedom; (b) Comparison of the error with respect to the seventh root of the
work (CPU time) measured in seconds.}
\label{fig:results_ex1}
\end{figure}

We remark that in both settings, the analytical solution to \eqref{eq:sl} is unknown and hence a suitably fine $hp$-mesh approximation is computed for the purposes of evaluating the corresponding $L^2(\Omega)$-norm of the (gradient of the) error in the $hp$-ILG approximation defined by \eqref{eq:weakWit'}. Given that $f=1$ throughout this section, the integral involving $f$ can be computed exactly, and hence \eqref{eq:weakWit} and \eqref{eq:weakWit'} are equivalent. The sequence of FE spaces $\Wp := \Sp{\cT_{p}}$, $p\geq 1$, are constructed as outlined in \S\ref{sec:hpGeoCorMesh}; namely, given an initial uniform triangular mesh $\cT_1$, the space $\mathbb{S}^{1}_{\cD}(\Omega;\cT_{1})$ is constructed based on employing (uniform order) continuous piecewise linear polynomials. Subsequently we undertake geometric refinement of each mesh $\cT_p$, $p\geq 1$, whereby elements in the vicinity of the four, respectively, six corners of the domain $\Omega$ are subdivided, while simultaneously (uniformly) increasing the polynomial order $p$. 
For the construction of the underlying $hp$-FE polynomial spaces, we exploit the hierarchical basis defined in \cite{Solin_book_2004}.
Additionally, the sequences of corner-refined, geometric meshes used in the numerical experiments
are built based on employing the standard red-green refinement strategy whereby (temporary) 
green refinement is undertaken to remove hanging nodes in the underlying computational mesh. 
The resulting sequence of meshes are generally not nested; 
however, we observe in the ensuing numerical experiments
that nestedness of partitions is not necessary 
to ensure that the theoretical bounds in Theorem~\ref{thm:MainRes} still hold. 

\begin{figure}[t!]
\begin{center}
\includegraphics[scale=0.5]{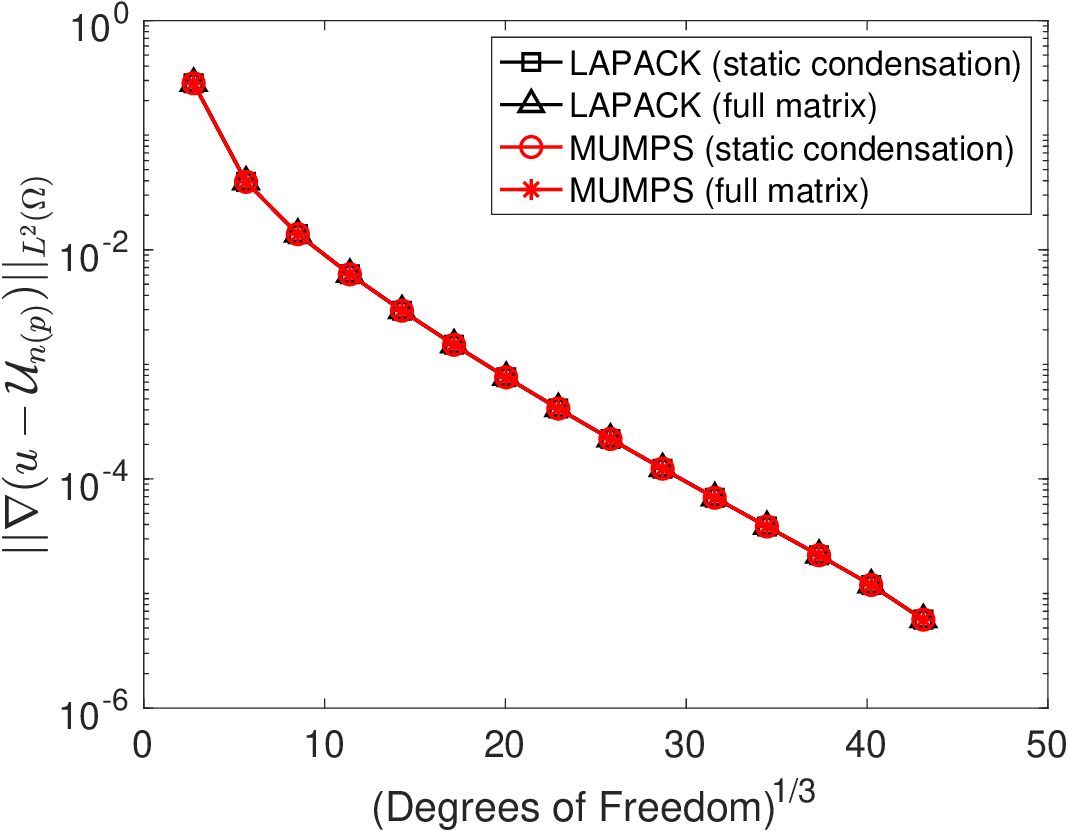} \\
(a) \\
~~\\
\includegraphics[scale=0.5]{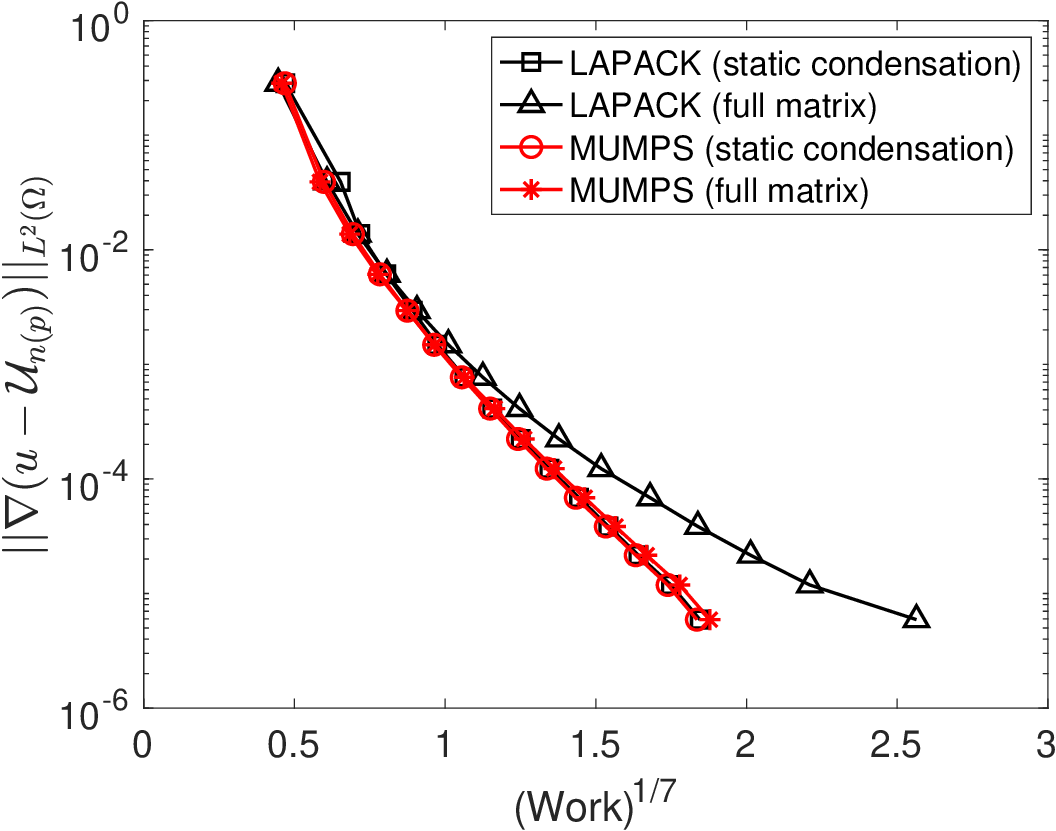} \\
(b)
\end{center}
\caption{Example 2 (L-shaped domain): (a) Comparison of the error with respect to the third root of the
number of degrees of freedom; (b) Comparison of the error with respect to the seventh root of the
work (CPU time) measured in seconds.}
\label{fig:results_ex2}
\end{figure}

For the purposes of comparison, we consider the implementation of two different linear solvers for the factorization of the underlying matrix arising from the bilinear form $a(\cdot,\cdot)$ from~\eqref{eq:a}. In addition to the Cholesky factorization {\tt DPOTRF} implemented in LAPACK for dense matrices, cf. \cite{lapack99}, we also consider the application of the MUltifrontal Massively Parallel Solver (MUMPS), see \cite{MUMPS:2,MUMPS:1,MUMPS:3}, which utilizes sparse (symmetric) storage of the underlying matrix problem. In Figures~\ref{fig:results_ex1}~\&~\ref{fig:results_ex2} we present a comparison of the norm of the error in the computed numerical solution $\tU_{n(p)}$ for $p=1,2,\ldots,12$, respectively, $p=1,2,\ldots,15$, based on employing a starting mesh $\cT_1$ comprising of 32, respectively 24, uniform triangular elements. Once the $hp$-ILG solution $\tU_{n(p)}\in \Sp{\cT_{p}}$ has been computed on a given mesh $\cT_p$, for a particular polynomial degree $p$, this solution is then projected onto the refined finite element space $\mathbb{S}^{p+1}_{\cD}(\Omega;\cT_{p+1})$ to serve as the initial guess in the Picard fixed point scheme \eqref{eq:weakWit'}; for $p=1$ the initial guess is the zero solution. The iteration defined in \eqref{eq:weakWit'} is terminated once the $\ell_2$-norm of the difference in the coefficient vector between two subsequent computed solutions has been reduced by a factor of $10^{-2}$, relative to
the corresponding quantity computed between the initial guess and the first iterate computed using \eqref{eq:weakWit'}; furthermore we set $\alpha = \nicefrac{1}{2}$. 
Here we consider the impact of static condensation,  
as per \S\ref{sec:static_condensation}, 
as well as when both MUMPS and LAPACK are applied to the full matrix problem arising in \eqref{eq:weakWit'}.
In Figures~\ref{fig:results_ex1}(a)~\&~\ref{fig:results_ex2}(a) we plot the $\Ltwo$-norm of the gradient of the error $u-\tU_{n(p)}$
against the third root of the number of degrees of freedom in the FE space $\Wp \equiv \Sp{\cT_{p}}$ on a semi-log plot; 
here we observe that for both examples, as $p$ is increased (and as the mesh is concurrently geometrically refined towards the corners of $\Omega$), 
the convergence lines become straight, 
thereby indicating exponential convergence, cf. Theorem~\ref{thm:MainRes}. 
Of course, all four numerical approaches produce identical results, as we would expect. 
Figures~\ref{fig:results_ex1}(b)~\&~\ref{fig:results_ex2}(b) present analogous results for both examples where we now plot $\|\nabla (u-\tU_{n(p)})\|_{\Ltwo}$ 
against the seventh root of the work, measured in CPU seconds, 
required to compute the $hp$-ILG solution $\tU_{n(p)}$ on each FE space. 
As expected, when static condensation is employed both MUMPS and LAPACK lead to exponential rates of convergence, 
which is in agreement with Theorem~\ref{thm:MainRes}. 
Furthermore, 
when LAPACK is exploited {\em without} static condensation, 
then we clearly observe a degeneration in the performance of the proposed approach. 
In contrast, we do not observe a similar degradation in the performance of the MUMPS solver applied to the full matrix
(i.e., the assembled global stiffness matrix, without static condensation); we postulate that this is due to the fact that MUMPS can automatically exploit the structure of the matrix
within the initial analyse phase, which is performed prior to computation of the factorisation. 
This ensures that exponential rates of convergence, 
\emph{in terms of the work required to compute the $hp$-ILG solution}, 
are retained in this setting.
Finally, we observe that the MUMPS solver, 
which stores the matrix in sparse format, 
is typically slightly more efficient than LAPACK which utilizes dense matrix storage.
\section{Conclusions}
\label{sec:Concl}
For a model semilinear elliptic boundary value problem in a polygonal domain $\Omega\subset \R^2$, 
with a monotone, polynomial nonlinearity and analytic in $\overline{\Omega}$ source term $f$ in \eqref{eq:sl},  
we proved that the underlying solution $u\in\HGamma$ can be approximated numerically 
to accuracy $\eps>0$ with no more than $\O((1+|\log(\eps)|^7)$ many operations.
This is, up to the constant hidden in $\O(\cdot)$, the same asymptotic 
complexity than resulting for the corresponding linear elliptic problem,
if a direct solver is used for the associated (linear) Galerkin system. 
Key in these results is analytic regularity of the weak solution \cite{HS22_1031}
in scales of corner-weighted Sobolev spaces.
The generalization to diffusion matrices with analytic in $\overline{\Omega}$ 
coefficients is possible at the expense of a quadrature error analysis 
in the (linear) principal part of \eqref{eq:sl} by means of an auxiliary result of Strang type.
By similar arguments the presence of nonlinear reactions with variable analytic coefficients 
may be realized as well (not considered here). 
It could also be of interest to combine the present results with 
$hp$-adaptive refinement, similar to \cite{BeckerBrunnerInnerbergerMelenkPraetorius:23}, where 
fixed order, Langrangian FE discretizations were studied.
\bibliographystyle{amsplain}
\bibliography{references} 
\end{document}